\newtheorem{thm}{Theorem}[section]
\newtheorem{prop}[thm]{Proposition}
\newtheorem{lemma}[thm]{Lemma}
\newtheorem{lem}[thm]{Lemma}
\newtheorem{cor}[thm]{Corollary}
\theoremstyle{definition}
\newtheorem{defn}[thm]{Definition}
\newtheorem*{definition*}         {Definition}
\newtheorem{eg}[thm]{Example}
\theoremstyle{remark}
\newtheorem*{claim}{Claim}
\newtheorem{remark}[thm]{Remark}
\renewcommand{\O}{\mathcal{O}}
\newcommand*{\Q}{\mathbb{Q}}
\newcommand*{\HH}{\mathbb{H}}
\newcommand*{\Abb}{\mathbb{A}}
\newcommand*{\A}{\mathcal{A}}
\newcommand*{\Z}{\mathbb{Z}}
\newcommand*{\G}{\mathbf{G}}
\newcommand*{\R}{\mathbb{R}}
\newcommand*{\C}{\mathbb{C}}
\newcommand*{\mat}[4]{\left(\begin{smallmatrix} #1 & #2\\#3 & #4 \end{smallmatrix}\right)}
\newcommand*{\PP}{\mathbb{P}}
\newcommand*{\Hom}{\textrm{Hom}}
\newcommand*{\ra}{\rightarrow}
\newcommand*{\ol}{\overline}
\def\alg{{\mathrm{alg}}}
\newcommand{\cD}{\mathcal{D}}
\newcommand{\cH}{\mathcal{H}}
\newcommand{\sH}{\mathscr{H}}
\newcommand{\cE}{\mathcal{E}}
\newcommand{\cM}{\mathcal{M}}
\newcommand{\cB}{\mathcal{B}}
\newcommand{\cP}{\mathcal{P}}
\newcommand{\cF}{\mathcal{F}}
\newcommand{\cV}{\mathcal{V}}
\def\sHom{\operatorname{\mathscr{H}\hspace{-.35em}\mathit{om}}}
\renewcommand{\phi}{\varphi}
\def\Hom{\operatorname{Hom}}
\def\Ext{\operatorname{Ext}}
\def\pt{\operatorname{pt}}
\def\gr{\operatorname{gr}}
\def\Aut{\operatorname{Aut}}
\def\Hdg{\operatorname{Hdg}}
\def\MHM{\operatorname{MHM}}
\def\rat{\operatorname{rat}}
\def\reduced{\mathrm{red}}
\newcommand{\df}{\mathrm{def}}
\newcommand{\an}{\mathrm{an}}
\renewcommand{\bar}[1]{\overline{#1}}
\title{Quasiprojectivity of images of mixed period maps} 
 \author[B. Bakker]{Benjamin Bakker}
\address{\noindent B. Bakker:  Dept. of Mathematics, University of Georgia, Athens, USA.}
\email{bakker.uga@gmail.com}
\author[Y. Brunebarbe]{Yohan Brunebarbe}
\address{\noindent Y. Brunebarbe:  Dept. of Mathematics, Univ. Bordeaux, Talence, France.}
\email{yohan.brunebarbe@math.u-bordeaux.fr}
\author[J. Tsimerman]{Jacob Tsimerman}
\address{\noindent J. Tsimerman:  Dept. of Mathematics, University of Toronto, Toronto, Canada.}
\email{jacobt@math.toronto.edu}
\begin{document}
\begin{abstract} We prove a mixed version of a conjecture of Griffiths:  that the closure of the image of any admissible mixed period map is quasiprojective, with a natural ample bundle. Specifically, we consider the map from the image of the mixed period map to the image of the period map of the associated graded.  On the one hand, we show in a precise manner that the parts of this map parametrizing extension data of non-adjacent-weight pure Hodge structures are quasi-affine.  On the other hand, extensions of adjacent-weight pure polarized Hodge structures are parametrized by a compact complex torus (the intermediate Jacobian) equipped with a natural theta bundle which is ample in Griffiths transverse directions.

Our proof makes heavy use of o-minimality, and recent work with B. Klingler associating a $\R_{\an,\exp}$-definable structure to mixed period domains and admissible mixed period maps. \end{abstract}
\maketitle

\section{Introduction}

Let $X$ be an algebraic space and $\cM$ a moduli space of graded-polarized integral mixed Hodge structures, henceforth referred to as a period space.  There is a period space $\cD$ parametrizing the associated graded objects of the points in $\cM$ with a map $\cM\to \cD$, and to any period map $\phi:X\to \cM$ corresponding to a variation of graded-polarized integral mixed Hodge structures there is a period map $\gr\phi:X\to \cD$ for the associated graded variation.  By the results of \cite{bbt} we have a factorization
\[\begin{tikzcd}
X\arrow{rr}{\gr\phi}\arrow{dr}[swap]{g}&&\cD\\
&Z\arrow[ru,hook,swap,"\epsilon"]&
\end{tikzcd}\]
where $g$ is algebraic and dominant (meaning $\O_Z\to g_*\O_X$ is injective) and $\epsilon$ is a closed immersion.  Moreover, the Griffiths bundle on $Z$ is algebraic and ample, and in particular $Z$ is quasiprojective.

Our main result is to extend this picture to the mixed case.  Precisely, we show:
\begin{thm}\label{thm main}  Let $X$ be a separated algebraic space of finite type over $\C$ and $\phi:X\to \cM$ the period map associated to an admissible\footnote{\label{footnote}See Definition \ref{defn admissible} for the definition of admissibility over nonreduced bases.} variation of (graded-polarized integral) mixed Hodge structures.  Then there is a factorization\[\begin{tikzcd}
X\arrow{rr}{\phi}\arrow{dr}[swap]{f}&&\cM.\\
&Y\arrow[ru,hook,swap,"\iota"]&
\end{tikzcd}\]
where $f$ is dominant algebraic and $\iota$ is a closed immersion.  Moreover, the natural theta bundle on $Y$ is algebraic and relatively ample over the image $Z$ of the period map of the associated graded.  In particular, $Y$ is quasiprojective. 
\end{thm}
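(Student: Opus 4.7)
My plan is to adapt the o-minimal strategy of \cite{bbt} used in the pure case, inserting a fiberwise analysis of the map $g: Y \to Z$ induced by the associated-graded period map. First, I would apply \cite{bbt} to $\gr \phi: X \to \cD$ to obtain the factorization $X \to Z \hookrightarrow \cD$ with $Z$ quasiprojective and the Griffiths bundle algebraic and ample. Using the $\R_{\an,\exp}$-definable structure on $\cM$ and the definability of admissible mixed period maps from the joint work with Klingler, I would equip $\cM$ with a definable analytic structure compatible with $\cM \to \cD$, and define $Y$ as the definable-analytic Stein-type image of $\phi$. This produces $f: X \to Y$ dominant, $\iota: Y \hookrightarrow \cM$ a closed immersion in the category of definable analytic spaces, and a compatible map $g: Y \to Z$.

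To produce algebraicity and positivity, I would next analyze the fibers of $\cM \to \cD$, which are torsors for the group parametrizing extension data of the associated graded Hodge structure. I would decompose this group so that non-adjacent-weight extensions contribute a quasi-affine factor, while adjacent-weight extensions are parametrized by the intermediate Jacobian, a compact complex torus carrying the natural theta bundle $\Theta$. The fiberwise ampleness of $\Theta$ on each intermediate Jacobian is classical; the key analytic input, as stated in the abstract, is its ampleness in all Griffiths-transverse directions on $Y$. Globalizing and combining $\Theta$ with the pullback from $Z$ of a sufficiently positive power of the Griffiths bundle should yield a definable hermitian line bundle on $Y$ positive in every direction transverse to the quasi-affine factors, with quasi-affineness of the non-adjacent-weight directions handled by a direct construction of definable sections.

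Finally, I would appeal to the definable Chow/GAGA machinery, extended to the mixed and possibly non-reduced setting via the Klingler-type definable structures, to conclude that $Y$ inherits an algebraic structure making $f$ algebraic, $\iota$ a closed immersion, and $\Theta$ algebraic and relatively ample over $Z$. Quasiprojectivity of $Y$ then follows from quasiprojectivity of $Z$ combined with relative ampleness of $\Theta$. The main obstacle I anticipate is controlling the non-adjacent-weight quasi-affine directions together with the compact adjacent-weight directions in a single uniform positivity statement on $Y$: the theta bundle is only positive transversely, so its combination with the Griffiths pullback and the quasi-affineness of the non-adjacent-weight factors must be glued into a global ample structure, and this must be done compatibly with the non-reducedness permitted in Definition \ref{defn admissible}.
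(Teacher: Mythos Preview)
Your high-level architecture is broadly right---factorize via definable GAGA, split the fibers of $\cM\to\cD$ into a compact-torus piece carrying $\Theta$ and a remaining ``affine'' piece---but the proposal glosses over precisely the step the paper identifies as the core difficulty, and the metric/curvature strategy you sketch will not work as stated.

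First, the paper explicitly notes that the fibers of $\cM\to\cD$ are flat, so there is no hermitian curvature on $\Theta$ (or on $\Theta$ tensored with the Griffiths pullback) that is positive in the non-adjacent-weight directions. The paper does not produce a positively curved metric; it instead proves an ampleness criterion (Proposition~\ref{lemma definably ample}, Corollary~\ref{cor qa crit}) saying that a map which is \emph{definable-analytically quasiaffine} is algebraically quasiaffine, and then shows the relevant map has this property.

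Second, and more seriously, your sentence ``quasi-affineness of the non-adjacent-weight directions handled by a direct construction of definable sections'' hides the real content. It is not enough that the ambient fiber of $\cM\to\cD$ is quasi-affine in those directions: one must show that the image $X_{U'}$ over a small neighborhood $U'\subset Y$ lifts to a definable-analytically quasiaffine cover. The obstruction is the local monodromy of the extension $0\to\cV_X\to\cE_X\to\Z_X\to 0$, which a priori lands in $W_{-2}V_\Z$. The paper uses Saito's mixed Hodge module formalism (Proposition~\ref{prop mhm}, Lemma~\ref{lemma saito}) to show this monodromy class is Hodge of weight $0$, and a separate Hodge-number bound (Lemma~\ref{lemma numbers}) to force it into $\Hdg_{-1}(W_{-2}V_y)_\Z$. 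Only then does $X_{U'}$ lift to $J^{\Hdg}_{U',y}$, which is visibly definable-analytically quasiaffine. Without this Hodge-theoretic control of the monodromy your ``direct construction'' has no input to work with.

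Finally, the paper does not decompose $\cM\to\cD$ in one step; it runs an induction on the weight filtration, peeling off one weight at a time via the map $\cM_w\to\cM_{w-1}\times_{\cD_{w-1}}\cM_{[w-1,w]}$ (Proposition~\ref{prop induct step}). At each step $\Theta_{[w-1,w]}$ is shown to be relatively ample and the residual map is shown to be quasiaffine; the global $\Theta$ is then a tensor product $\bigotimes_w\Theta_{[w-1,w]}$ assembled at the end.
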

The period space $\cM$ is naturally a quotient $\Gamma\backslash M$ of a graded-polarized integral mixed period domain $M$ by an arithmetic group $\Gamma$, but the same result for the quotient\footnote{Such quotients are not good moduli spaces however as they do not in general have a tame geometry.} $\Gamma_\mathrm{mon}\backslash M$ by the image of the monodromy representation easily follows from Theorem \ref{thm main} (see Corollary \ref{cor mon image}).  

As in \cite{bbt}, we for instance obtain as a corollary the following:

\begin{cor}\label{introcoarse} Let $\mathcal{X}$ be a separated Deligne--Mumford stack of finite type over $\C$ admitting a quasi-finite admissible\textsuperscript{\emph{\ref{footnote}}} mixed period map.  Then the coarse moduli space of $\mathcal{X}$ is quasi-projective.
\end{cor}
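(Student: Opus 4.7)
The plan is to deduce the corollary from Theorem \ref{thm main} by passing through the coarse moduli space and applying Zariski's main theorem. Since $\mathcal{X}$ is a separated Deligne--Mumford stack of finite type over $\C$, by Keel--Mori it admits a coarse moduli space $X_c$ which is a separated algebraic space of finite type over $\C$. The goal is to produce an ample line bundle on $X_c$.

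First I would extend Theorem \ref{thm main} to the DM setting by étale descent. Choose an étale atlas $U\to \mathcal{X}$ by an algebraic space. The composition $U \to \mathcal{X} \to \mathcal{M}$ is a quasi-finite admissible mixed period map (admissibility and quasi-finiteness are preserved under étale pullback), so Theorem \ref{thm main} yields a factorization $U \to Y \hookrightarrow \mathcal{M}$ with $Y$ quasi-projective and $U \to Y$ dominant algebraic and quasi-finite. Because $Y$ is characterized as the closure of the image of $\phi$ in $\mathcal{M}$, it depends only on $\mathcal{X}$ (not on the chosen atlas), and since $Y \hookrightarrow \mathcal{M}$ is a monomorphism the two pullbacks of $U\to Y$ to $U \times_{\mathcal{X}} U$ agree. Étale descent then produces a canonical map $f: \mathcal{X} \to Y$ through which $\phi$ factors, and $f$ is dominant algebraic and quasi-finite.

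Since $Y$ is an algebraic space (hence has trivial inertia), the universal property of $X_c$ factors $f$ as $\mathcal{X} \to X_c \xrightarrow{\bar{f}} Y$. The morphism $\bar{f}$ is separated (both source and target are) and of finite type, and it is quasi-finite because $f$ is and $\mathcal{X} \to X_c$ is surjective. By Zariski's main theorem for algebraic spaces, any separated quasi-finite morphism of finite type between Noetherian algebraic spaces is quasi-affine. Hence $\bar{f}$ is quasi-affine, so pulling back the ample theta bundle from $Y$ produces an ample line bundle on $X_c$ (ampleness is preserved by affine pullback and by restriction to open subschemes, so by quasi-affine pullback). A Noetherian algebraic space carrying an ample line bundle is automatically a quasi-projective scheme, completing the argument.

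The main obstacle is the first step: carefully extending Theorem \ref{thm main} from algebraic spaces to Deligne--Mumford stacks via étale descent, which relies on the canonicity of the closed image $Y$ and the fact that $Y \hookrightarrow \mathcal{M}$ is a monomorphism. Once $f: \mathcal{X} \to Y$ is in hand, the remaining ingredients (Keel--Mori, Zariski's main theorem, preservation of ampleness under quasi-affine pullback, quasi-projectivity of algebraic spaces with an ample bundle) are entirely classical.
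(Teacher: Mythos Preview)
Your argument is correct and is essentially the deduction the paper intends: note that the paper gives no proof of this corollary, only the remark ``As in \cite{bbt}\ldots'', so the expected argument is exactly the \'etale-descent-plus-Zariski's-main-theorem reduction you carry out.  One small imprecision: the theta bundle on $Y$ is only asserted to be \emph{relatively} ample over $Z$ in Theorem~\ref{thm main}; to get an honestly ample bundle on $Y$ you should tensor with a sufficiently high power of the Griffiths bundle pulled back from $Z$ (or simply invoke that $Y$ is quasi-projective, hence carries some ample bundle, which is all you actually use).
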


The factorization statement in Theorem \ref{thm main} follows easily from \cite{bbkt} and \cite{bbt}, and the main content is the relative ampleness of the theta bundle.  This is especially interesting compared to the corresponding result in the pure case as the positivity does not stem from the negative curvature of $\cM$; indeed, the fibers of $\cM\to\cD$ are flat.  

The theta bundle is loosely constructed as follows (see section \ref{sect theta} for details).  For any polarized integral pure Hodge structure $V=(V_\Z,F^\bullet V,q_\Z)$ of weight $-1$, extensions of the form
\begin{equation}0\to V\to E\to\Z(0)\to0\label{eq one step}\end{equation}
are parametrized by the intermediate Jacobian
\[J(V)=V_\C/F^0V+V_\Z\]
which is a (compact) complex torus.  The polarization of $V$ endows $J(V)$ with a natural theta bundle which is positive in Griffiths transverse directions.  Now for a general variation of graded-polarized integral mixed Hodge structures $E$, we obtain from each variation $\gr^W_{[w-1,w]}E:=W_{w}E/W_{w-2}E$ an extension of the form \eqref{eq one step} via the natural pullback:
\[\begin{tikzcd}[column sep=small]
0\arrow[r]&\gr_{w-1}^WE\otimes (\gr_{w}^WE)^\vee\arrow[d,equal]\arrow{r}&E' \ar[r]\arrow{d}&\Z(0)\arrow{r}\arrow{d}&0\\
0\arrow{r}&\gr_{w-1}^WE\otimes (\gr_{w}^WE)^\vee\arrow{r}&\gr^W_{[w-1,w]}E \otimes (\gr_{w}^WE)^\vee\arrow{r}&\gr^W_{w}E\otimes (\gr_{w}^WE)^\vee\arrow{r}&0
\end{tikzcd}\]
The theta bundle of Theorem \ref{thm main} is then the product $\Theta:=\bigotimes_w \Theta_{[w-1,w]}$ of the theta bundles $\Theta_{[w-1,w]}$ associated to each of the $\gr^W_{[w-1,w]}E$.  In fact, it is easy to see that $\bigotimes_i \Theta_{[w-1,w]}^{a_w}$ is $f$-ample for any $a_w>0$.  

There are two main difficulties in establishing the relative ampleness of $\Theta$.  First, we must show $\Theta$ is algebraic.  This follows for $X$ smooth by work of Brosnan--Pearlstein \cite{bparch} and in general by definable GAGA \cite{bbt}.  We also give a new proof of the result of Brosnan--Pearlstain, see Remark \ref{rmk new proof}.  

Second, the theta bundle only accounts for the compact parts of the extension data, and the rest of the argument is devoted to showing that the remaining extension data is affine.  More precisely, there are period maps for which $Y\to Z$ has positive-dimensional fibers but for which all of the $\gr^W_{[w-1,w]}E$ are locally constant on the fibers, and in this case the theorem requires $\O_Y$ to be relatively ample---i.e., that $Y$ is quasiaffine over $Z$.  This ultimately relies on the geometry of mixed period spaces parametrizing extensions as in \eqref{eq one step} with $V$ of weight $\leq -2$ and our argument critically uses the work of Saito.

\subsection{Outline}  In \S\ref{sect alg} we prove the factorization part of Theorem \ref{thm main} and the algebraicity of the theta bundle.  In \S\ref{sect setup} we prove an ampleness criterion in terms of point separation by definable sections.  We also apply the work of Saito to prove some results on the local monodromy of the unipotent part of a variation of mixed Hodge structures.  In \S\ref{sect proof} we prove the quasiprojectivity part of Theorem \ref{thm main}.  

\subsection{Acknowledgements}  Y.B. would like to thank P. Brosnan for an interesting discussion related to the biextension bundle.  B.B. was partially supported by NSF grants DMS-1702149 and DMS-1848049.

\subsection{Notation}  Unless otherwise stated, by definable we mean definable in the o-minimal structure $\R_{\an,\exp}$.  All algebraic spaces are assumed to be separated and of finite type over $\C$; all definable analytic spaces (resp. analytic spaces) are complex definable analytic spaces (resp. complex analytic spaces).  We generally do not distinguish notationally between algebraic spaces, their associated definable analytic spaces, or their associated analytic spaces.

\section{Algebraicity of period maps and theta bundles}\label{sect alg}

Throughout, we use the following terminology.  Let $(V_\Z,W_\bullet V_\Q)$ be a free finite rank $\Z$-module with an increasing rational filtration.  We denote by $\gr^W_wV_\Z$ the $w$th graded object $\gr^W_wV_\Q$ with the integral structure induced by $V_\Z$. For each $w$ let a $(-1)^w$-symmetric form $q_w$ on $\gr^W_wV_\Z$ be given. There is an associated graded-polarized mixed period domain $M$ parametrizing graded-polarized mixed Hodge structures on $(V_\Z,W_\bullet V_\Q,q_\bullet)$.  By a graded-polarized mixed period space we mean the quotient $\cM=\Gamma\backslash M$ by an arithmetic subgroup $\Gamma\subset \G(\Z):=\Aut(V_\Z,W_\bullet V_\Q,q_\bullet)$.  We have that $\cM$ is naturally an $\R_{\alg}$-definable analytic space by \cite{bbkt}. When the weight filtration has one nonzero graded piece we refer to $\cM$ (resp. $M$) as a polarized pure period space (resp. domain), and usually denote it by $\cD$ (resp. $D$).  We also denote by $\check{M}$ the ``compact" dual of $M$---the space of filtrations $F^\bullet$ on $V_\C$ with fixed $\dim \gr^p_F\gr^W_wV_\C$ such that the induced filtration $F^\bullet \gr^W_wV_\C$ is $q_w$-isotropic---which is naturally a complex algebraic variety.  See for instance \cite{usui, pearlstein, bbkt} for background on mixed period spaces.

\subsection{Admissible period maps}

For a definable analytic space $X$, by a definable period map we mean a definable locally liftable map $\phi:X\to \cM$ which is tangent to the Griffiths transverse foliation of $\cM$ on the reduced\footnote{Note in particular that we do not require the nilpotent tangent directions to be Griffiths transverse, though it is not clear that this level of generality is useful:  variations coming from geometry will satisfy Griffiths transversality in the nilpotent directions as well.} regular locus.  A definable period map is equivalent to a variation of graded-polarized integral mixed Hodge structures, which consists of:  a filtered local system $(\cV_\Z,W_\bullet \cV_\Q,q_\bullet)$ locally modeled on $(V_\Z,W_\bullet V_\Q,q_\bullet )$ and a locally split filtration $F^\bullet$ of $\cV_\Z\otimes_{\Z}\O_X$ by definable coherent subsheaves which satisfies Griffiths transversality on the reduced regular locus and which is fiberwise a graded-polarized integral mixed Hodge structure.

We briefly recall the notion of admissible variations; see for instance \cite{kashiwara} for details.  Let $(\cV_\Z, W_\bullet \cV_\Q, F^\bullet)$ be a variation of graded-polarizable integral mixed Hodge structures on the punctured disk $\Delta^*$ with unipotent monodromy. Let $\bar V $ and $\bar W_\bullet \bar V$ denote the canonical extensions of $\cV_\Z \otimes_{\Z} \O_{\Delta^*}$ and $W_\bullet\cV_\Q \otimes_{\Q} \O_{\Delta^*}$ to $\Delta$ respectively, equipped with their logarithmic connections. Recall that the variation $(\cV_\Z, W_\bullet \cV_\Z, F^\bullet)$ is called pre-admissible if the following conditions hold:
\begin{enumerate}
\item The residue at the origin of the logarithmic connection on $\bar V$, which is an endomorphism of the fiber $\bar V_{|0}$ of $\bar V$ at the origin, admits a weight filtration relative to $\bar W_{\bullet}\bar V_{|0}$.
\item The Hodge filtration $F^\bullet$ extends to a locally split filtration $\bar F^\bullet$ of $\bar V $ such that $\gr_ {\bar F}^p  \gr_k^{\bar W} \bar V$ is locally-free for all $p$ and $k$.
\end{enumerate}

Given a Zariski-open subset $S$ in a reduced complex analytic space $\bar S$, we say that a graded-polarizable variation $(\cV_\Z, W_\bullet\cV_\Q,F^\bullet)$ on $S$ is admissible with respect to the inclusion $S \subset \bar S$ if for any holomorphic map $f : \Delta \to \bar S$ such that $f(\Delta^*) \subset S$ and $f^* \cV_\Z$ has unipotent monodromy, the pull-back variation on $\Delta^*$ is pre-admissible.  One easily verifies that a variation on $\Delta^*$ with unipotent monodromy which is pre-admissible is admissible with respect to the inclusion $\Delta^* \subset \Delta$.  Moreover, if a variation over a complex algebraic variety $S$ is admissible with respect to an algebraic compactification $\bar S$ of $S$, then it is admissible with respect to any other algebraic compactification of $S$.

\begin{defn}\label{defn admissible}  Let $X$ be an algebraic space. We say a period map $\phi:X\to \cM$ is \emph{admissible} if it is definable and the reduced map $\phi^{\reduced}:X^{\reduced}\to\cM$ is admissible. 
\end{defn}

See Corollary \ref{cor tfae} for some further discussion on the admissibility condition in the nilpotent directions.

\subsection{Properness of admissible period maps}

We will need an extension property for mixed period maps in Lemma \ref{lemma make proper} that is analogous to Griffiths' result in the pure case \cite[Theorem 9.5]{Giii}.  This is most likely known to experts, but we include a full argument for the reader's convenience.  \\

We first prove a criterion of properness for definable analytic maps analogous to the valuative criterion of properness for algebraic maps.

\begin{lemma}\label{valuative criterion of properness}
Let $X$ be an algebraic space, $\cM$ a definable analytic space and $\phi : X \to \cM$ a definable analytic map. Then the map $\phi$ is proper if, and only if, the following property holds:  a definable holomorphic map $v: \Delta^* \to X $ extends to $\Delta$ as soon as $\phi \circ v : \Delta^* \to \cM$ does.
\end{lemma}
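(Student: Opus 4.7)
The plan is to prove both implications using standard techniques from definable analytic geometry; the reverse implication is the substantive one.

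For $(\Rightarrow)$, assume $\phi$ is proper and let $v : \Delta^* \to X$ be a definable holomorphic map whose composition $\phi \circ v$ extends to a definable holomorphic map $\bar\psi : \Delta \to \cM$. Form the base change $Y := X \times_{\cM,\bar\psi} \Delta$, a definable analytic space whose projection $\pi : Y \to \Delta$ is proper since $\phi$ is. The map $v$ provides a section $s : \Delta^* \to Y$ of $\pi$ over $\Delta^*$, and I take $C$ to be the closure of $s(\Delta^*)$ in $Y$. By the definable analogue of Remmert--Stein, $C$ is a closed definable analytic subspace of pure dimension one. The map $C \to \Delta$ is proper, surjective (its image is closed and contains $\Delta^*$), and generically an isomorphism; taking the unique irreducible component $C_0$ dominating $\Delta$ and its normalization gives a proper birational map $\tilde C_0 \to \Delta$ of normal one-dimensional analytic spaces, hence an isomorphism. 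Composing $\Delta \cong \tilde C_0 \to C_0 \hookrightarrow Y \to X$ produces the desired extension of $v$.

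For $(\Leftarrow)$, assume the stated extension property and suppose for contradiction that $\phi$ is not proper. Then there exists a definably compact $K \subset \cM$ whose preimage $\phi^{-1}(K) \subset X$ is not definably compact. Applying a definable-analytic curve selection lemma to the definable analytic subspace $\phi^{-1}(K) \subset X$, I produce a definable holomorphic map $v : \Delta^* \to \phi^{-1}(K)$ that does not extend to $\Delta \to X$. The composition $\phi \circ v$ takes values in the compact set $K$, so it is bounded and by Riemann's removable singularity theorem extends holomorphically to $\Delta \to \cM$; the extension remains definable because graphs of holomorphic extensions of definable bounded maps are definable in $\R_{\an,\exp}$. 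This contradicts the extension hypothesis, so $\phi$ must be proper.

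The main obstacle is the backward direction, specifically the production of a non-extendable definable holomorphic disk into the non-definably-compact locus $\phi^{-1}(K)$. O-minimal curve selection immediately yields a definable real-analytic arc escaping $\phi^{-1}(K)$, but promoting this to a holomorphic arc in the definable analytic category requires more care. The standard route is to work inside an algebraic (hence definable) compactification $\bar X$ of $X$, pick a boundary point $p$ of $\overline{\phi^{-1}(K)} \cap (\bar X\setminus X)$, and use a definable version of resolution of singularities together with a holomorphic curve selection on $\bar X$ to produce a holomorphic disk $\Delta \to \bar X$ centered at $p$ whose punctured part lies in $\phi^{-1}(K)$. Everything else in the argument is routine, and this one technical input is the only delicate part.
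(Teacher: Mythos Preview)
Your forward direction is fine (though the ``definable Remmert--Stein'' you invoke is really Bishop's theorem on closures of definable analytic sets, since the fiber $Y_0$ over $0$ may well have dimension $\geq 1$). It is more hands-on than the paper's argument but works.

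The backward direction has a genuine gap. You write that you will ``apply a definable-analytic curve selection lemma to the definable analytic subspace $\phi^{-1}(K)\subset X$'', but $\phi^{-1}(K)$ is \emph{not} an analytic subspace: $K$ is merely a definably compact set, so $\phi^{-1}(K)$ is only a definable subset of $X$, and o-minimal curve selection there gives a real arc, not a holomorphic disk. Your proposed repair in the last paragraph---resolve $\bar X$ and find a holomorphic disk through a boundary point $p\in\overline{\phi^{-1}(K)}\cap(\bar X\setminus X)$---does not solve the problem: you can certainly get a holomorphic $v:\Delta\to\bar X$ with $v(0)=p$ and $v(\Delta^*)\subset X$, but there is no reason the punctured disk lands in $\phi^{-1}(K)$, and without that you have no control over $\phi\circ v$ near $0$, so Riemann's theorem does not apply.

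The paper's device fixes exactly this. One takes the closure $\tilde X$ of the graph of $\phi$ inside $\bar X\times\cM$; by Bishop this closure is definable analytic, and the projection $\tilde\phi:\tilde X\to\cM$ is automatically proper since $\bar X$ is. Then $\phi$ is proper iff $X=\tilde X$. If not, $\tilde X\setminus X=((\bar X\setminus X)\times\cM)\cap\tilde X$ is a \emph{closed analytic} subset of $\tilde X$, so holomorphic curve selection produces $v:\Delta\to\tilde X$ with $v(0)\in\tilde X\setminus X$ and $v(\Delta^*)\subset X$. Projecting to $\bar X$ shows $v$ does not extend into $X$; projecting to $\cM$ shows $\phi\circ v$ does extend. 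The single construction $\tilde X$ handles both directions at once and replaces the non-analytic set $\phi^{-1}(K)$ with an honestly analytic locus.
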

\begin{proof}
Clearly we can assume that both $X$ and $\cM$ are reduced. Let $\bar X$ be an algebraic space compactifying $X$ and let $\tilde X$ denote the topological closure of $X$ in $\bar X \times \cM$. Then $\tilde X$ is definable and analytic by Bishop's theorem \cite[Theorem 3]{bishop}, as definable sets have locally bounded volume. Since $\bar X$ is proper the induced holomorphic map $\tilde  \phi : \tilde X \to \cM$ is proper, and the map $\phi : X \to \cM$ is proper exactly when $X= \tilde X$. Assume first that $\phi$ is not proper, so that there exists $\tilde x \in \tilde X - X$. Since $\tilde X - X$ is a closed analytic subset of $\tilde X$ (as it is the intersection of $(\bar X - X) \times \cM$ with $\tilde X$), there exists $v : \Delta \to \tilde X$ a definable holomorphic map such that $v(\Delta^\ast) \subset X$ and $v(0) = \tilde x$. Then the map $\phi \circ v : \Delta^* \to \cM$ does extend to $\Delta$ but $v$ does not. Conversely, let $v : \Delta^* \to X$ be a definable holomorphic map such that $\phi \circ v $ extends to $\Delta$. The induced definable holomorphic map $\Delta^* \to X \times \cM$ extends to $\Delta \to \bar X \times \cM$ and takes values in $\tilde X = X$, hence we are done.

\end{proof}

We now apply this criterion to our situation.  Let $X$ be a smooth algebraic space, $\phi:X\to \cM$ an admissible period map, and let $X\subset \bar X$ be a smooth compactification such that $\bar X \backslash X=\bigcup_i D_i$ is a normal crossing divisor.
%   Assume that the period map of the associated graded $\gr\phi:X\to \cD$ extends to a proper map $\ol{\gr\phi}:Z\to \cD$.  
Note that for any $i$ the local monodromy around $D_i$ is quasi-unipotent.  We may cover $\bar X$ by polydisks $P\cong \Delta^{n_P}$ such that $P\cap X\cong (\Delta^*)^{r_P}\times\Delta^{s_P}$.  For each polydisk $P$ we choose a basepoint $x_P\in P$, and let $N_i^P$ be the logarithm of the unipotent part of the local monodromy operator associated to the $D_i$ meeting $P$.  For each $P$ let $C_P$ be the cone generated by $\{N^P_i\}$.

\begin{lemma}\label{lemma proper cond}
The period map $\phi$ is proper if, and only if none of the cones $C_P$ contains $0$.
\end{lemma}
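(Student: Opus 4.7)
The plan is to apply Lemma \ref{valuative criterion of properness}, which reduces non-properness of $\phi$ to the existence of a definable holomorphic $v:\Delta^*\to X$ such that $\phi\circ v$ extends to $\Delta\to\cM$ but $v$ itself does not extend to $\Delta\to X$. Since $\bar X$ is compact, any such $v$ automatically extends to $\bar v:\Delta\to\bar X$, and non-extension into $X$ means $\bar v(0)\in \bar X\smallsetminus X$. After shrinking the disk, we may assume $\bar v(\Delta)\subset P$ for a single polydisk $P$ in our cover; writing $v=(v_1,\dots,v_{n_P})$ in $P$-coordinates and setting $m_i:=\ord_0 v_i$ for $i\le r_P$, the condition $\bar v(0)\notin X$ becomes the existence of some $m_i>0$.

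The heart of the argument will be the equivalence
\[
\phi\circ v \text{ extends to } \Delta\to\cM \iff \sum_i m_i N_i^P=0.
\]
The monodromy of $v^*\cV$ about $0$ is $\prod_i T_i^{m_i}$, whose unipotent logarithm is exactly $N:=\sum_i m_i N_i^P$. After a finite base change $t\mapsto t^L$ making the monodromy unipotent (which affects neither the cone $C_P$ nor the vanishing of $N$), Kashiwara's admissible nilpotent orbit theorem shows that the lifted period map is asymptotic in $M$ to $\exp(zN)F_0$ for a limiting filtration $F_0\in \check M$. Since $\cM$ is Hausdorff $\R_{\an,\exp}$-definable by \cite{bbkt}, this orbit has a limit in $\cM$ as $z\to i\infty$ precisely when $N=0$: if $N=0$ the orbit is the constant $F_0$ and descends to a continuous extension; if $N\ne 0$, $\exp(zN)F_0$ leaves every compact subset of $\cM$, forcing the same for the genuine period map by the asymptotic comparison.

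Granting this equivalence, both directions of the lemma follow. If $\phi$ is not proper, the $v$ produced by the valuative criterion exhibits $0$ as the non-trivial nonnegative combination $\sum_i m_i N_i^P$ of generators of the relevant $C_P$, with some $m_i>0$. Conversely, suppose $\sum_i a_i N_i^P=0$ with $a_i\ge 0$ not all zero. Since the $N_i^P$ lie in the rational Lie algebra $\g_\Q$, any real linear relation among them is $\Q$-linear, so after clearing denominators we obtain $m_i\in\Z_{\ge 0}$ with some $m_i>0$ and $\sum_i m_i N_i^P=0$. The map $v:\Delta^*\to P\cap X$ whose $i$-th component ($i\le r_P$) is $t^{m_i}$ when $m_i>0$ and a nonzero constant when $m_i=0$, and whose remaining components vanish, is polynomial hence definable holomorphic, does not extend to $X$ at the origin, and has $\phi\circ v$ extending by the equivalence, witnessing the failure of properness.

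The main obstacle is the asymptotic equivalence above. In the pure case this goes back to Schmid, but in the admissible mixed setting it requires Kashiwara's nilpotent orbit theorem together with the Hausdorff $\R_{\an,\exp}$-definable structure on $\cM$ from \cite{bbkt} to rule out the possibility that a nonzero nilpotent orbit in $M$ becomes bounded after quotienting by $\Gamma$. Once that key input is in place, the remaining reductions---to a single polydisk, passage from real to rational relations, and explicit construction of $v$---are routine.
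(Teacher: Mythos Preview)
Your argument is correct and follows the same route as the paper's proof: apply the valuative criterion of Lemma~\ref{valuative criterion of properness}, shrink $v$ into a single polydisk $P$, compute the monodromy as $\sum_i m_i N_i^P$, and use the nilpotent orbit theorem to characterize when $\phi\circ v$ extends. The paper simply cites \cite[Proposition~4.3]{bbkt} for the equivalence ``$\phi\circ v$ extends $\Leftrightarrow$ monodromy trivial,'' whereas you sketch it via Kashiwara's asymptotics and the Hausdorff definable structure on $\cM$; you also spell out the real-to-rational-to-integral passage that the paper leaves implicit. One small point: your equivalence as stated concerns only the unipotent part $N$, so in the converse direction you should scale the $m_i$ by a common multiple of the orders of the semisimple parts $T_i^{ss}$ to ensure the full monodromy of your test map $v$ is trivial---this is a harmless fix and the paper's proof glosses over it too.
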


When $\cM$ is a pure period space, it follows from the strong version of the nilpotent orbit theorem \cite[Theorem 1.15]{hodgeasymp} that the cone $C_P$ contains $0$ only if one of the $N^P_i$'s is zero, in accordance with Griffiths' result. 
\begin{proof}
Let $v: \Delta^* \to X $ be a definable holomorphic map. Thanks to the nilpotent orbit theorem \cite[Proposition 4.3]{bbkt}, the composition $\phi \circ v : \Delta^* \to \cM$ extends to $\Delta$ exactly when the monodromy around $0$ is zero. On the other hand, after shrinking one can assume that $v: \Delta^* \to X $ takes values in one of the polydisk $P$. Then the logarithm of the unipotent part of the monodromy around $0$ is of the form $\sum_i a_i \cdot N_i^P$ for some non-negative integers $a_i$, and conversely every integral element of $C_p$ arises from a definable holomorphic map  $\Delta^* \to P $. Since the map   $v: \Delta^* \to X$ extends to $\Delta \to X$ exactly when all the $a_i$'s are zero, we conclude using Lemma \ref{valuative criterion of properness}.

\end{proof}

\begin{prop}\label{lemma make proper}Let $X$ be a smooth algebraic space and $\phi:X\to \cM$ an admissible period map.  Then there exists a log smooth partial compactification $X\subset \tilde X$ for which the period map extends to a proper map $\ol\phi: \tilde X\to\cM$. 
\end{prop}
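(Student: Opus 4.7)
The plan is to construct $\tilde X$ as an open subset of a smooth NCD compactification $\bar X \supset X$ produced by Hironaka, by adjoining precisely those boundary divisors along which the monodromy of $\phi$ vanishes. First I would reduce to the case of unipotent local monodromies by pulling $\phi$ back along a suitable finite ramified cover (a semistable-reduction-type construction along the boundary) and descending the resulting extension at the end: properness of $\tilde\phi$ is preserved by taking a quotient of the source under a finite group acting properly, so it suffices to prove the proposition for the cover. Henceforth assume every local monodromy $T_i^P$ around a boundary component $D_i$ is unipotent with logarithm $N_i^P$, and set
\[
\tilde X := \bar X \setminus \bigcup_{i:\, N_i \neq 0} D_i.
\]
The one-variable nilpotent orbit theorem applied at each adjoined divisor, together with the admissibility of $\phi$, produces an extension $\tilde\phi : \tilde X \to \cM$ which is a definable admissible period map. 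Log smoothness of $\tilde X$ is automatic from the NCD structure of $\bar X$.

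It remains to verify that $\tilde\phi$ is proper, which by Lemma~\ref{lemma proper cond} reduces to showing that at every $p \in \bar X \setminus \tilde X$, the cone generated by the surviving (necessarily nonzero) local log-monodromies $\{N_i^P : D_i \ni p\}$ does not non-trivially contain $0$---i.e., that no non-trivial non-negative rational combination $\sum_i a_i N_i^P$ vanishes. I expect this to be the main obstacle. To handle it, I would first try to invoke Kashiwara's several-variable nilpotent orbit theorem for admissible graded-polarized variations, which gives that every element of the open positive cone generated by the $N_i^P$ carries a common non-trivial relative monodromy weight filtration whenever some $N_i^P$ is nonzero; in particular no non-trivial positive combination can vanish, and Lemma~\ref{lemma proper cond} applies directly.

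If the direct appeal to the several-variable theorem is not quite sharp enough in the mixed setting, I would instead further modify $\bar X$ by a sequence of rational toroidal blow-ups. Since the $N_i^P$'s are rational (being logarithms of unipotent integer matrices), the non-negative vanishing relations form a rational polyhedron inside $\Q^{r_P}_{\geq 0}$, and a standard KKMSD-style rational fan subdivision can be globalized over $\bar X$ to refine the NCD structure so that each new boundary cone either lies entirely in the kernel of $(a_i) \mapsto \sum_i a_i N_i^P$ (in which case the corresponding stratum is adjoined to $\tilde X$) or meets that kernel only at the origin (in which case the stratum is excluded). After finitely many such refinements, the redefined $\tilde X$ satisfies the hypothesis of Lemma~\ref{lemma proper cond} at every boundary point, and the resulting $\tilde\phi : \tilde X \to \cM$ is the desired proper extension.
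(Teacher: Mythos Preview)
Your primary approach via Kashiwara's theorem fails in the mixed setting, and the paper gives an explicit counterexample immediately after the proposition: for the projection $\phi:\Abb^1\times\G_m\to\G_m$ compactified in $\PP^2$, at the corner point the two local log-monodromies are $N$ and $-N$, both nonzero, yet $N+(-N)=0$.  The assertion that ``every element of the open positive cone carries a common non-trivial relative monodromy weight filtration'' is false: when each $N_i$ acts trivially on $\gr^W$ (as here), the relative weight filtration of every element of the cone is $W$ itself, so Kashiwara's constancy statement gives no obstruction to a positive combination vanishing.  Simply deleting the divisors with $N_i\neq 0$ therefore need not produce a proper map---blow-ups are genuinely required in the mixed case.

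Your fallback is the correct argument and is exactly what the paper does: for each polydisk $P$ one chooses an integral simplicial subdivision of $\R_{\geq 0}^{r_P}$ adapted to the kernel cone $K=\ker(e_i\mapsto N_i^P)\cap\R_{\geq 0}^{r_P}$, performs the corresponding monomial modification, extends over the new divisors whose rays lie in $K$ via the nilpotent orbit theorem, and then dominates all the local modifications by a common one.  One correction: the dichotomy you state---each cone either contained in $K$ or meeting $K$ only at the origin---is too strong and is never achievable when $0<\dim K<r_P$, since any full-dimensional simplicial cone containing a ray of $K$ then meets $K$ in a proper nonzero face.  The right (and achievable) condition is that $K$ be a union of cones of the subdivision, equivalently that $\sigma\cap K$ be a face of $\sigma$ for every $\sigma$; Lemma~\ref{lemma proper cond} then applies because a relation $\sum_{\rho\not\subset K}a_\rho N_\rho=0$ with $a_\rho\geq 0$ forces $\sum a_\rho v_\rho\in\sigma\cap K$, hence (by simpliciality) each $a_\rho=0$.

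Finally, the preliminary reduction to unipotent monodromy by a branched cover is unnecessary: as in the paper one can work directly with the logarithms of the unipotent parts of the quasi-unipotent local monodromies, extending across divisors with trivial unipotent part.  This avoids the extra bookkeeping of making the toroidal subdivision equivariant and descending the partial compactification.
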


\begin{proof}  Let $\bar X$ be a log smooth compactification of $X$.  For any polydisk $P$, consider the positive octant $\R_{\geq 0}^{r_P}$.  The assignment of the monodromy operator $N_i$ to the standard basis vector $e_i$ yields a linear map $\R^{r_P}\to\frak{g}$ to the Lie algebra. Its kernel is an integral linear subspace of $\R^{r_P}$, and we denote by $K$ its intersection with $\R^{r_P}_{\geq 0}$.  We may find an integral simplicial subdivision of the standard fan on $\R_{\geq 0}^{r_P}$ for which $K$ is a union of facets.  This subdivision corresponds to a (global) monomial modification ${\bar X}_P\to \bar X$ for which the condition of Lemma \ref{lemma proper cond} is satisfied on the preimage of $P$, once we extend the period map over the boundary components with no monodromy using the nilpotent orbit theorem \cite[Proposition 4.3]{bbkt}. Notice that any further monomial modification $\bar Z \to {\bar X}_P$ will also satisfy the condition above $P$.  Thus, taking $\bar Z$ to be a monomial modification of $\bar X$ that dominates each of the ${\bar X}_P$ and extending the period map over the boundary components with no monodromy, the condition of Lemma \ref{lemma proper cond} is satisfied. 
% The period map then extends  by the nilpotent orbit theorem \cite[Proposition 4.3]{bbkt}.
\end{proof}

\begin{eg}Unlike in the pure case, some blow-ups may be necessary.  Consider the mixed period space $\G_m=\Ext^1_{\Z MHS}(\Z(0),\Z(1))$ and the period map $\Abb^1\times\G_m\to\G_m$ which is just the second projection.  Take $\Abb^1\times\G_m\subset\Abb^2\subset \PP^2$ as a log smooth compactification.  On each vertical line $\Abb^1\times\{z\}$ the period map extends as it is trivial, but the period map does not globally extend over the line at infinity.  In this case if the monodromy logarithm around $\Abb^1\times\{0\}$ is $ N$, then the monodromy around infinity is $-N$. 
\end{eg}

\subsection{Algebraicity of the Hodge filtration}
For any definable analytic space $X$ and a $\C$-local system $\cE$, $\cE\otimes_{\C_X}\O_X$ is naturally a definable analytic coherent sheaf by taking flat trivialization on a definable cover by simply-connected open sets.  The following is a nonreduced version of the Deligne extension which is essentially contained in \cite[\S 5]{bbt}.  
\begin{prop}   Let $X$ be an algebraic space and $\cE$ a $\C$-local system whose local monodromy has unit norm eigenvalues.  Then the definable analytic coherent sheaf $E_X:=\cE\otimes_{\C_X}\O_{X}$ is algebraic.
\end{prop}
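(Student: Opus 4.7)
The strategy is to exhibit $E_X$ as an $\R_{\an,\exp}$-definable analytic coherent sheaf on $X$ and then invoke the definable GAGA theorem of \cite[\S 5]{bbt}: every $\R_{\an,\exp}$-definable analytic coherent sheaf on a separated finite-type algebraic space over $\C$ is algebraic.

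The underlying topological space of $X$ coincides with that of $X^{\reduced}$, so the local system $\cE$ depends only on $X^{\reduced}$.  The sheaf $E_X$ is locally free of rank $n=\rk\cE$, and on any open $U$ trivializing $\cE$ the transition matrices between flat frames are constant elements of $\GL_n(\C)$.  Consequently, the definable structure is inherited verbatim from $X^{\reduced}$ to $X$, and it suffices to produce a definable open cover of a smooth model of $X^{\reduced}$ on which the flat trivializations of $\cE$ glue in a definable way.

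To this end, pick a resolution and a log smooth compactification $\bar X\supset X^{\reduced}$ with normal crossing boundary $D=\bigcup D_i$.  In local coordinates $(z_1,\dots,z_k,w_1,\dots,w_m)$ near a boundary point with $D_i=\{z_i=0\}$, after a finite ramified cover on which the monodromy is unipotent, the Deligne canonical extension of $E_X$ across $D$ is freely generated by the frame
\[e_j=\exp\bigl(-\textstyle\sum_{i=1}^k (\log z_i)\,N_i/(2\pi i)\bigr)\cdot s_j,\]
where $\{s_j\}$ is a multivalued flat basis and $N_i=\log T_i$ is the logarithm of the local monodromy operator around $D_i$.  The hypothesis that the monodromy eigenvalues have unit norm forces the spectrum of $N_i/(2\pi i)$ to be real, so that the entries of $z_i^{N_i/(2\pi i)}$ are linear combinations of terms $|z_i|^\alpha\,e^{i\alpha\arg z_i}(\log z_i)^\ell$ with $\alpha\in\R$, each of which is $\R_{\an,\exp}$-definable on the polydisk.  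Away from $D$ the sheaf is trivialized on simply-connected definable opens by constant matrices, and patching these data yields the required definable analytic coherent structure on the extension over $\bar X$---hence on $E_X$ by restriction.  Definable GAGA then gives algebraicity.

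The main (indeed essentially only) obstacle is the definability of the Deligne frame near the boundary: were the monodromy eigenvalues allowed to have nonunit norm, the powers $z_i^{N_i/(2\pi i)}$ would contain factors $|z_i|^{a+bi}$ with $b\ne 0$, which are not $\R_{\an,\exp}$-definable.  The unit-norm hypothesis is precisely what rules this out and makes the whole construction go through.
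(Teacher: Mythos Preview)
The core of your argument---constructing the Deligne extension on a log smooth compactification $\bar X$, verifying that the canonical frame is $\R_{\an,\exp}$-definable because the monodromy logarithms have purely imaginary spectrum, and then applying GAGA on the proper space $\bar X$---is correct and is exactly what the paper does when $X^{\reduced}$ is smooth.  (One minor point: it is ordinary GAGA on $\bar X$ that yields algebraicity of the extension; the definability is then used separately to check that the induced algebraic structure on the restriction $E_X$ agrees with its a priori definable structure.)

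There is, however, a genuine gap in your reduction to the smooth case.  You write that ``it suffices to produce a definable open cover of a smooth model of $X^{\reduced}$,'' but passing to a resolution $\pi:Y\to X$ changes the space, and algebraicity of $E_Y=\pi^*E_X$ on $Y$ does not automatically descend to algebraicity of $E_X$ on $X$.  The paper supplies precisely this missing step: one blows up along reduced centers to obtain a proper $\pi:Y\to X$ which is dominant over a dense open $U\subset X$ and has $Y^{\reduced}$ smooth; then $E_{X'}\hookrightarrow\pi'_*E_Y$ for $X'$ the image of $Y$, so definable GAGA (now used legitimately, as a statement about definable coherent subsheaves of an algebraic sheaf) makes $E_{X'}$ algebraic.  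A pushout square identifying $X$ in terms of $X'$ and a thickening $S$ of the complement of $U$, together with Noetherian induction on $S$, finishes the general case.  Your proposal omits this descent argument entirely.

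A smaller but related issue: your opening sentence asserts that definable GAGA says ``every $\R_{\an,\exp}$-definable analytic coherent sheaf on a separated finite-type algebraic space over $\C$ is algebraic.''  That is not the statement of definable GAGA, which requires properness; were it true, the proposition would be immediate (since $E_X$ is already definable by construction) and the entire Deligne-extension computation would be superfluous.
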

\begin{proof}  First assume $X^{\reduced}$ is smooth.  Let $Z$ be a compactification of $X$ for which $(Z^\reduced,Z^\reduced \backslash X^{\reduced})$ is log smooth.  We may take a $\R_\an$-definable cover of $Z^\an$ by open sets $P$ for which $P^\reduced\cong \Delta^n$ is a polydisk and $(P^*)^\reduced\cong(\Delta^*)^r\times\Delta^s$ where $P^*=P\cap X$.  As $P$ is Stein (since $\Delta^n$ is), we may lift the coordinates to functions $q_i$ on $P$, which are $\R_\an$-definable after shrinking $P$.  Now the $\R_{\an,\exp}$-definable analytic space structure on $P^*$ induces one on any chosen $\R_{\an,\exp}$-definable simply-connected fundamental set of the covering map $\HH^r\times \Delta^s\to(\Delta^*)^r\times \Delta^s$; call this space $\Phi$.  The $q_i$ are $\R_{\an}$-definable morphisms $P\to\C$ and as the multivalued function $\log:\C^*\to\C$ is definable on angular sectors, the logarithms $z_i=\log q_i$ are $\R_{\an,\exp}$-definable on $\Phi$.  We then define a Deligne extension $\bar E$ of $E_X$ locally using the lattice $\tilde v:=\exp(\sum_iz_iN_i)v$ for flat sections $v$ of $\cE$ where the $N_i$ are logarithms of the local monodromy, and the same proof as in the reduced case shows these extensions patch (see for instance \cite[Proposition 5.4]{Deligne_book}).  Now by ordinary GAGA \cite{gaga}, the extension $\bar E$ is algebraic, and an algebraic frame can be written analytically (hence $\R_\an$-definably) in terms of the $\tilde v$, while the change-of-basis to the flat frame $\exp(\sum_i z_iN_i)$ is $\R_{\an,\exp}$-definable as the $N_i$ are imaginary. 

In the general case, by performing blow-ups along reduced centers we may produce a proper map $\pi:Y\to X$ which is dominant on an open set $U$ of $X$ and for which $Y^\reduced$ is smooth.  Let $X'$ be the image of $Y$ in $X$.  For a sufficiently big thickening $S$ of the reduced complement $X^\reduced\backslash U^\reduced$, the following square is a pushout
\[\xymatrix{
S\times_X X'\ar[d]\ar[r]&X'\ar[d]\\
S\ar[r]&X.
}\] 
As $\pi':Y\to X'$ is dominant and $E_{X'}\subset \pi'_*E_Y$, by definable GAGA \cite[Theorem 3.1]{bbt} $E_{X'}$ is algebraic, while by Noetherian induction $E_{S}$ is algebraic.  $E_X$ is the pushout of $E_S$ and $E_{X'}$, hence algebraic.
\end{proof}

\begin{cor}\label{cor tfae}Let $X$ be an algebraic space with an analytic period map $\phi:X\to\cM$ whose reduction $\phi^{\reduced}:X^{\reduced}\to\cM$ is admissible.  Then the following are equivalent:
\begin{enumerate}
\item $\phi$ is definable (or equivalently admissible);
\item The Hodge filtration pieces $F^\bullet_X$ are definable analytic subbundles of the ambient flat vector bundle; 
\item The Hodge filtration pieces $F^\bullet_X$ are algebraic subbundles of the ambient flat vector bundle. 
\end{enumerate}
\end{cor}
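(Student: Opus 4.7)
The plan is to deduce the three equivalences from the nonreduced Deligne extension result just proved, together with definable GAGA from \cite{bbt}.  The starting observation is that since $\phi^{\reduced}$ is admissible, the local monodromy of the underlying local system is quasi-unipotent, so in particular has unit norm eigenvalues.  Hence the previous proposition applies and $\cV_\Z\otimes_\Z \O_X$, the ambient flat vector bundle, is algebraic.  All three conditions are thus statements about subsheaves of a single algebraic vector bundle $V_X$ on $X$, with its three natural structures (algebraic, definable analytic, analytic) visible simultaneously.

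The implication (3) $\Rightarrow$ (2) is trivial since algebraic sheaves on an algebraic space are automatically definable analytic.  The implication (2) $\Rightarrow$ (3) follows by applying definable GAGA \cite[Theorem 3.1]{bbt} to the algebraic bundle $V_X$: a definable analytic subbundle of an algebraic bundle on $X$ is algebraic.  For (1) $\Rightarrow$ (2), one uses that a definable period map factors, locally on definable simply-connected opens $U\subset X$, as $U\to \check M\to M\to \cM$, where the first map is definable because $\cM$ is endowed with an $\R_{\an,\exp}$-definable analytic structure by \cite{bbkt} in which the local lift to the algebraic variety $\check M$ is definable.  Pulling back the tautological filtration on $\check M$ via the definable map $U\to \check M$ produces a definable filtration of $V_X$ which locally agrees with $F^\bullet_X$, so the $F^\bullet_X$ are definable analytic subbundles.

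The remaining implication (2) $\Rightarrow$ (1) is the one requiring care, and will be the main obstacle.  The analytic map $\phi$ is already given, so what must be checked is only that it is definable as a map of $\R_{\an,\exp}$-definable analytic spaces, where $\cM$ carries its definable structure from \cite{bbkt}.  To unwind this, cover $X$ by definable simply-connected opens $U$ admitting a definable lift $\tilde\phi_U : U\to M$ of $\phi$ to the graded-polarized mixed period domain, and further to its compact dual $\tilde\phi_U : U\to \check M$ (using that $\check M$ is algebraic, the forgetful map $M\to \check M$ is an open immersion of definable analytic spaces).  The map to $\check M$ is determined tautologically by the datum of the Hodge filtration as a subbundle of the algebraic bundle $V_X|_U$ with its chosen definable flat frame; hence if the $F^\bullet_X$ are definable analytic subbundles, the lift $\tilde\phi_U$ is a definable analytic map to the algebraic variety $\check M$.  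Composing with the definable map $M\to\cM$ (a quotient by the $\Gamma$-action, definable via a definable fundamental set) then shows $\phi|_U$ is definable.  These local definable maps agree on overlaps with the given analytic $\phi$, so glue to a global definable map, establishing (1).
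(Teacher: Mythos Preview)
Your proof is correct and follows essentially the same approach as the paper: $(2)\Leftrightarrow(3)$ via the preceding proposition plus definable GAGA, and $(1)\Leftrightarrow(2)$ by observing that on a definable cover by simply-connected opens the definability of the lifts $U\to M$ is tautologically equivalent to the definability of the Hodge filtration in a flat frame.  One small slip: in your $(1)\Rightarrow(2)$ the factorization should read $U\to M\hookrightarrow\check M$ (with $M$ open in the compact dual), not $U\to\check M\to M\to\cM$.
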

\begin{proof}$(2)\Leftrightarrow(3)$ is immediate given the proposition and definable GAGA.  For $(1)\Leftrightarrow(2)$, let $U_i$ be a definable cover of $X$ by simply-connected open sets.  The definability of $\phi$ (given the definability of $\phi^{\reduced}$) is equivalent to the definability of the lifts $U_i\to M$ to the universal cover $M$ of $\cM$, which is in turn clearly equivalent to the definability of $F^\bullet_X$ as a subbundle of the ambient flat bundle with its flat definable structure.
\end{proof}

\begin{remark}  Recall by \cite{sz} that all variations of graded-polarized integral mixed Hodge structures coming from geometry are admissible.  From the corollary it is clear that this is true over possibly non-reduced bases as well.
\end{remark}

To algebraize theta bundles in Section \ref{sect theta}, we will need the following result, which formalizes the idea that the deformation theory of variations of Hodge structures is algebraic, even in the singular setting.

\begin{prop}\label{prop nbhd}  Let $\cM$ be a graded-polarized mixed period space and $X\subset\cM$ an algebraic Griffiths-transverse closed definable analytic subspace.  Then for any $n$ the $n$th order thickening of $X$ in $\cM$ is algebraic.  
\end{prop}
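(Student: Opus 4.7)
My plan is to argue by induction on $n$, with the base case $n=0$ being the hypothesis that $X$ is algebraic. For the inductive step, I would use the short exact sequence of definable coherent $\O_\cM$-modules
\[0 \to \I_X^n/\I_X^{n+1} \to \O_{X_n} \to \O_{X_{n-1}} \to 0,\]
where $\I_X \subset \O_\cM$ is the ideal sheaf of $X$, to present $X_n$ as an infinitesimal thickening of $X_{n-1}$. The kernel $\I := \I_X^n/\I_X^{n+1}$ is annihilated by $\I_X$, so it is naturally a definable coherent $\O_X$-module, and definable GAGA \cite{bbt} applied to the algebraic space $X$ shows it is algebraic. For $n \geq 1$ one has $\I^2 = 0$ inside $\O_{X_n}$, so the closed immersion $X_{n-1} \hookrightarrow X_n$ is a genuine square-zero thickening with conormal sheaf $\I$.

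Square-zero thickenings of a fixed base $Y$ by a fixed coherent module $\I$ are classified by a torsor under $\mathrm{Ext}^1_{\O_Y}(\mathbb{L}_{Y/\C}, \I)$, where $\mathbb{L}_{Y/\C}$ is the cotangent complex, and the same formalism applies in the definable analytic category. The inductive hypothesis furnishes that $X_{n-1}$ is algebraic, so the derived form of definable GAGA applied to $X_{n-1}$ identifies the algebraic and definable analytic $\mathrm{Ext}^1$ groups. The class of the definable analytic thickening $X_n$ therefore comes from an algebraic class, and the corresponding algebraic square-zero thickening of $X_{n-1}$ by $\I$ is canonically isomorphic as a definable analytic space to $X_n$, giving $X_n$ an algebraic structure.

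The main technical point to verify is the passage from the coherent sheaf version of definable GAGA in \cite{bbt} to its derived/hypercohomology form required to compare $\mathrm{Ext}^1$ groups with the cotangent complex; this should follow from the coherent statement together with standard homological machinery. The Griffiths transversality hypothesis on $X$ does not enter the inductive argument directly, but ensures the Hodge-theoretic setup is consistent via Corollary \ref{cor tfae}.
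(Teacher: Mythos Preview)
Your deformation-theoretic approach is genuinely different from the paper's, and the step you flag as ``the main technical point to verify'' is in fact a real gap rather than a routine exercise.  Definable GAGA as established in \cite{bbt} is a sheaf-level statement: for an algebraic space $X$ the functor $\Coh(X)\to\Coh(X^{\df})$ is fully faithful with essential image closed under subobjects and quotients.  It does \emph{not} assert essential surjectivity, nor any comparison of cohomology or $\Ext$ groups.  What your argument needs is that the image is closed under \emph{extensions}---equivalently, that $\Ext^1_{\O_{X_{n-1}}}(\mathbb L_{X_{n-1}/\C},\I)\to\Ext^1_{\O_{X_{n-1}^{\df}}}(\mathbb L_{X_{n-1}^{\df}/\C},\I^{\df})$ is surjective---and for the typically non-proper $X_{n-1}$ this does not follow from full faithfulness plus subquotient closure by any standard homological machinery.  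A related (smaller) gap appears earlier: you assert that $\I=\I_X^n/\I_X^{n+1}$ is algebraic ``by definable GAGA,'' but \cite{bbt} only yields this once $\I$ is exhibited as a subquotient of a sheaf already known to be algebraic, which requires an argument specific to the geometry of $\cM$.

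The paper sidesteps both issues by a direct construction exploiting that period domains sit inside flag varieties.  After reducing to $X$ reduced, one builds the relative flag variety $\mathrm{Fl}=\mathrm{Fl}(W_\bullet V)$ over $X$ parametrizing $q_\bullet$-isotropic filtrations of the (algebraic, by Corollary \ref{cor tfae}) flat bundle $V$; this is algebraic over $X$.  The Hodge filtration furnishes an algebraic section $s:X\to\mathrm{Fl}$, and the $n$th thickening $S_n$ of $s(X)$ inside $\mathrm{Fl}$ is then tautologically algebraic.  Passing to the universal cover (where $\mathrm{Fl}$ is identified with $\Gamma\backslash(M\times\check M)$ and $\cM$ with the diagonal), one sees that $S_n$ is analytically the $n$th thickening of $X$ in $\cM$; Griffiths transversality then enters precisely to invoke Corollary \ref{cor tfae} and conclude that the map $S_n\to\cM$ is definable.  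No $\Ext$ comparison is required.
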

\begin{proof}By definable GAGA we may assume $X$ is reduced.  Let $(\mathcal{V}_\C,W_\bullet \mathcal{V}_\C)$ be the filtered $\C$-local system underlying the mixed variation on $X$, and let $(V,W_\bullet V,F^\bullet V)$ be the associated bifiltered vector bundle with its canonical algebraic structure.  Consider $\mathrm{Fl}=\mathrm{Fl}(W_\bullet V)$ the relative flag variety of filtrations $F'^\bullet V$ of $V$ which intersect $W_\bullet V$ with the same dimensions as $F^\bullet V$ and for which the induced filtration $F'^\bullet\gr_k^WV$ is $q_k$-isotropic for each $k$.  We have a section $s:X\to \mathrm{Fl}$ of the natural map $\pi:\mathrm{Fl}\to X$ given by $F^\bullet V$; let $S_n\subset \mathrm{Fl}$ be the $n$th order thickening of $s(X)$ in $\mathrm{Fl}$, which is clearly algebraic.

There is a natural admissible period map $\iota:S_n\to \cM$ extending the inclusion $X\subset\cM$ which we claim is the closed embedding of the $n$th order thickening.  Indeed, $S_n$ can also be analytically constructed as follows.  Let $\tilde X$ be the universal cover of $X$, so we have a closed embedding $\tilde X\subset M$ where $M$ is the universal cover of $\cM$.  Let $\tilde S_n$ be the $n$th order thickening of $X$ in $M$, considered as a subspace of $M\times\check{M}$ via the diagonal embedding $M\to M\times \check{M}$ where $\check M$ is the ``compact" dual.  Now if $\Gamma$ is the image of the monodromy representation, then we have an embedding $S^\an_n=\Gamma\backslash \tilde S_n\subset \mathrm{Fl}=\Gamma\backslash(M\times\check{M})$ where $\Gamma$ acts diagonally on $M\times \check{M}$.  

That $S_n$ is analytically the closed embedding of the $n$th order thickening is now obvious, and the definability of $\iota:S_n\to \cM$ follows from Corollary \ref{cor tfae}.

\end{proof}

\subsection{Algebracity of images}

%Given \cite{}, the main step in proving the images of mixed period maps are algebraic is making them proper:

%\begin{prop}  
%
%Let $X$ be an algebraic space whose reduced space is smooth and let $\phi: X\to \cM$ be an admissible mixed period map.  Then there is a diagram
%\[\begin{tikzcd}
%X\arrow{rr}{\phi}\arrow{dr}[swap]{j}&&\cM\\
%&Z\arrow[ru,swap,"\bar\phi"]&
%\end{tikzcd}\]
%where $j:X\to Z$ is an open embedding into an algebraic space $Z$ such that $(Z^{\reduced},Z^{\reduced}\backslash X^{\reduced})$ is log smooth and $\bar\phi$ is proper.
%\end{prop}
%
%\begin{proof}  Using Lemma \ref{}, let $j:X\to Z$ be a compactification by an algebraic space for which:  $(Z^\reduced,Z^\reduced\backslash X^\reduced)$ is log smooth and $\phi^\reduced$ extends to a proper period map $\bar\phi:Z^\reduced\to\cM$.  For $\phi$ to extend to $Z$ its enough for there to be a common lift $F_Z^\bullet$ of $F_X^\bullet$ and $F_{Z^\reduced}^\bullet$.
%
%By Corollary \ref{}, $ F_X^\bullet$ is algebraic 
%\end{proof}

\begin{prop}\label{prop images}Let $X$ be an algebraic space and $\phi:X\to \cM$ a definable mixed period map.  Then there is a factorization
\[\begin{tikzcd}
X\arrow{rr}{\phi}\arrow{dr}[swap]{f}&&\cM.\\
&Y\arrow[ru,hook,swap,"\iota"]&
\end{tikzcd}\]
where $f$ is dominant algebraic and $\iota$ is a closed immersion.  
\end{prop}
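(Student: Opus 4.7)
The plan is to reduce $\phi$ to a proper definable analytic map via Proposition \ref{lemma make proper}, define $Y$ as the scheme-theoretic image in $\cM$ as a closed definable analytic subspace, and then algebraize $Y$ by combining the pure case result from \cite{bbt} with definable GAGA.

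First I would reduce to $X^\reduced$ smooth by resolution of singularities: the pullback of $\phi$ to a smooth resolution $X'\to X^\reduced$ remains admissible, the algebraic structure on $Y^\reduced$ descends through the proper surjection $X' \to X^\reduced$ by standard descent, and Proposition \ref{prop nbhd} propagates the nonreduced thickening of $Y$ from that of $X$. With $X^\reduced$ smooth, Proposition \ref{lemma make proper} provides a log smooth partial compactification $X^\reduced\subset \tilde X$ on which $\phi^\reduced$ extends to a proper definable analytic map $\bar\phi : \tilde X \to \cM$. Since $\bar\phi$ is proper, the pushforward $\bar\phi_*\O_{\tilde X}$ is a definable coherent $\O_\cM$-algebra by the definable analytic proper mapping theorem of \cite{bbt}, so the kernel of $\O_\cM \to \bar\phi_*\O_{\tilde X}$ cuts out a closed definable analytic subspace $Y^\reduced\subset \cM$, which is the sought-after scheme-theoretic image. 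By construction $\iota: Y^\reduced\hookrightarrow \cM$ is a closed immersion and $\bar\phi$ factors through a proper dominant definable analytic map $\bar f : \tilde X \to Y^\reduced$.

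To equip $Y^\reduced$ with an algebraic structure for which $\bar f$ is algebraic, I would compose with the natural projection $\cM \to \cD$ and apply the pure case of the factorization result from \cite{bbt}: the composition $\tilde X \to \cM \to \cD$ is the proper extension of $\gr\phi$, and already factors as a proper algebraic dominant map $\bar g : \tilde X \to Z$ followed by a closed immersion $Z \subset \cD$, where $Z$ is an algebraic space. This gives an induced definable analytic morphism $\pi : Y^\reduced \to Z$ which is proper, since $\bar g = \pi \circ \bar f$ is proper and $\bar f$ is proper surjective, hence $\pi_*\O_{Y^\reduced}$ is a definable coherent $\O_Z$-algebra. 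Because $Z$ is algebraic, definable GAGA \cite[Theorem 3.1]{bbt} forces $\pi_*\O_{Y^\reduced}$ to be algebraic; taking the relative spectrum produces the algebraic structure on $Y^\reduced$ for which $\bar f$ (and hence its restriction $f: X \to Y$) is algebraic and dominant, after the nonreduced structure has been propagated via Proposition \ref{prop nbhd}.

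The main technical obstacle is this last algebraization step, since $\cM$ itself is only a definable analytic space and one cannot algebraize $Y$ directly inside it. The key trick is to push coherent data down to the algebraic base $Z$, where definable GAGA is available; this in turn depends crucially on properness of $\pi : Y^\reduced \to Z$, which follows formally from properness of $\bar g$ and surjectivity of $\bar f$. The remaining steps—verifying admissibility under resolution, checking that the scheme-theoretic image construction commutes with restriction to $X$, and handling the nonreduced thickenings via Proposition \ref{prop nbhd}—are essentially formal given the framework of \cite{bbkt,bbt}.
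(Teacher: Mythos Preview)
Your overall strategy matches the paper's: resolve, extend to a proper map via Proposition \ref{lemma make proper}, algebraize the image, and then pass to the nonreduced case via Proposition \ref{prop nbhd}. The gap is in the algebraization step. You propose to recover $Y^\reduced$ as the relative spectrum of $\pi_*\O_{Y^\reduced}$ over $Z$, but this only works when $\pi:Y^\reduced\to Z$ is affine. In the mixed setting $\pi$ typically has positive-dimensional fibers---the whole point of the paper is that the fibers of $\cM\to\cD$ carry intermediate Jacobians and other nontrivial extension data, and these can persist in the image $Y^\reduced$. Thus $\Spec_Z(\pi_*\O_{Y^\reduced})$ is merely the Stein factorization of $\pi$; the canonical map $Y^\reduced\to\Spec_Z(\pi_*\O_{Y^\reduced})$ contracts each connected fiber to a point, so definable GAGA over $Z$ has algebraized the wrong space.

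The paper sidesteps this by invoking \cite[Theorem 4.2]{bbt} directly for the proper map $\bar\phi:Z'\to\cM$. That theorem is not specific to pure targets $\cD$; it produces the dominant-plus-closed-immersion factorization for any proper definable analytic map from an algebraic space to a definable analytic space, and its proof already contains the nontrivial algebraization argument you are missing. Once properness has been arranged, the reduced case is a one-line citation, and your detour through $Z$ is unnecessary.
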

\begin{proof} First assume $X$ reduced and let $\pi: X'\to X$ be a resolution.  By Lemma \ref{lemma make proper} there is a partial compactification $X'\subset Z'$ for which the period map of $X'$ extends to a proper map $\bar\phi: Z'\to\cM$.  Now apply \cite[Theorem 4.2]{bbt}. 

In general, let $Y$ be the closure of the image of $X^\reduced$, which is algebraic by the above.  By definable GAGA, the pullback $X_n\subset X$ of the $n$th order thickening of $Y$ to $X$ is an increasing sequence of subspaces set-theoretically supported on all of $X^\reduced$, which by Noetherian induction \cite[Cor. 2.32]{bbt} on the supports of the ideal sheaves $I_{X_n}$ is eventually all of $X$.  By Proposition \ref{prop nbhd} and definable GAGA, the claim for $X$ follows.
\end{proof}

\begin{cor}\label{cor mon image}  Let $\Gamma_\mathrm{mon}\subset\G(\Z)$ be the image of the monodromy representation of the variation of mixed Hodge structures associated to $\phi$, and $\phi_\mathrm{mon}:X\to \Gamma_\mathrm{mon}\backslash M$ the corresponding lift of $\phi$.  Then there is a factorization
\[\begin{tikzcd}
X\arrow{rr}{\phi_\mathrm{mon}}\arrow{dr}[swap]{f}&&\Gamma_\mathrm{mon}\backslash M.\\
&Y\arrow[ru,hook,swap,"\iota"]&
\end{tikzcd}\]
where $f$ is dominant algebraic and $\iota$ is a closed immersion.  
\end{cor}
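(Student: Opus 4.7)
The plan is to reduce to Proposition \ref{prop images}. Since $\Gamma_\mathrm{mon}$ need not be arithmetic, one cannot apply that proposition directly to the admissible period map $\phi_\mathrm{mon}:X\to\Gamma_\mathrm{mon}\backslash M$. Instead, I would apply it downstairs to produce $Y\hookrightarrow\cM$ and then identify the desired image inside $\Gamma_\mathrm{mon}\backslash M$ as a finite étale cover of $Y$.

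First, apply Proposition \ref{prop images} to get a factorization $X\xrightarrow{f}Y\xrightarrow{\iota}\cM$ with $f$ dominant algebraic and $\iota$ a closed immersion. Let $\Gamma_Y\subseteq\Gamma$ be the image of the monodromy representation of $\cV_\Z$ pulled back to $Y^\reduced$, so that $\iota$ lifts canonically to a closed immersion $Y\hookrightarrow\Gamma_Y\backslash M$ and $\phi_\mathrm{mon}$ factors through the analytic fiber product $\tilde Y:=Y\times_{\Gamma_Y\backslash M}(\Gamma_\mathrm{mon}\backslash M)$. The space $\tilde Y$ is an étale analytic cover of $Y$ whose fibers are the $\pi_1(Y^\reduced)$-set $\Gamma_Y/\Gamma_\mathrm{mon}$.

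The main step is to show $\tilde Y\to Y$ is finite, equivalently $[\Gamma_Y:\Gamma_\mathrm{mon}]<\infty$. To this end, I would pick a dense Zariski open $U\subseteq Y^\reduced$ over which $f^\reduced$ becomes smooth; by the homotopy long exact sequence the image of $\pi_1(f^{-1}(U)^\reduced)\to\pi_1(U)$ has index at most the number of connected components of a general fiber of $f^\reduced$, hence is finite. Since inclusion of a dense Zariski open subset of a complex algebraic space induces a surjection on $\pi_1$, it follows that $\pi_1(X^\reduced)\to\pi_1(Y^\reduced)$ has finite-index image; pushing forward through $\pi_1(Y^\reduced)\to\Gamma_Y$ then gives $[\Gamma_Y:\Gamma_\mathrm{mon}]<\infty$.

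Given the finiteness, $\tilde Y$ is canonically algebraic over $Y$ by Riemann existence, with closed immersion into $\Gamma_\mathrm{mon}\backslash M$. Take $Y_\mathrm{mon}$ to be the union of those connected components of $\tilde Y$ that meet the image of the canonical lift $X\to\tilde Y$. Then $\iota_\mathrm{mon}:Y_\mathrm{mon}\hookrightarrow\Gamma_\mathrm{mon}\backslash M$ is a closed immersion, and the induced map $X\to Y_\mathrm{mon}$ is dominant by construction and algebraic as a section of the finite étale morphism $X\times_Y Y_\mathrm{mon}\to X$. I expect the main technical point to be the finite-index argument, which requires some care with fundamental groups of (possibly singular) algebraic spaces.
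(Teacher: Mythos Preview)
Your approach is correct in outline but takes a genuinely different route from the paper.  The paper does not argue about $\pi_1$ at all.  Instead it first reduces to the case where $\phi$ (hence $\phi_\mathrm{mon}$) is \emph{proper}, exactly as in the proof of Proposition \ref{prop images}: replace $X$ by a partial compactification of a resolution using Proposition \ref{lemma make proper}, noting that this does not change $\Gamma_\mathrm{mon}$.  It also reduces, by passing to a finite-index normal torsion-free subgroup $\Gamma'_\mathrm{mon}\subset\Gamma_\mathrm{mon}$ and quotienting afterward, to the situation $\Gamma_\mathrm{mon}\subset\Gamma$ with $\Gamma$ torsion-free arithmetic.  Once $\phi$ is proper and $\Gamma$ is torsion-free, the map from the image $Y\subset\Gamma_\mathrm{mon}\backslash M$ to the image $Z\subset\Gamma\backslash M$ is both \'etale and proper, hence finite \'etale, and algebraicity of $Y$ follows immediately from that of $Z$.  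So the finiteness you work to establish via $\pi_1$ comes for free from properness.

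Your argument buys independence from the compactification machinery of Proposition \ref{lemma make proper}, at the cost of two technical points you should make explicit.  First, a smooth morphism is not a fibration without properness, so the homotopy exact sequence does not apply directly to $f^{-1}(U)\to U$; you need Thom's first isotopy lemma (after stratifying a relative compactification) or an equivalent device to get a topologically locally trivial map over a possibly smaller dense open.  Second, you implicitly assume $\Gamma_\mathrm{mon}\backslash M\to\Gamma_Y\backslash M$ is an honest \'etale cover, which requires $\Gamma_Y$ to act freely on $M$; the paper handles the analogous torsion issue by the finite-index torsion-free reduction mentioned above, and you should do the same.
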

\begin{proof}  As in the above proof we may assume $\phi$ and therefore $\phi_\mathrm{mon}$ is proper.  Taking $\Gamma'_\mathrm{mon}\subset\Gamma_\mathrm{mon}$ to be a finite-index torsion-free normal subgroup, $Y$ will be the quotient of the image of $X$ in $\Gamma_\mathrm{mon}'\backslash M$ by $\Gamma_\mathrm{mon}/\Gamma_\mathrm{mon}'$, so we may assume $\Gamma_\mathrm{mon}$ to be contained in a torsion-free normal arithmetic subgroup $\Gamma\subset\G(\Z)$.  Now $Y$ is a finite \'etale cover of the image in $\Gamma\backslash M$ and therefore algebraic.
\end{proof}
\subsection{Theta bundles}\label{sect theta}
Let $\cD$ be a polarized pure period space parametrizing polarized weight $-1$ Hodge structures $V$ on $(V_\Z,q_\Z)$.  We can consider the graded-polarized mixed period spaces $\cM$ resp. $\cM'$ of extensions   
\[0\to V\to E\to \Z(0)\to 0\]
resp. 
\[0\to \Z(-1)\to E\to V\to 0\]
both of which map to $\cD$.  We may also consider the graded-polarized mixed period space $\cB$ parametrizing mixed Hodge structures $E$ with weights $[-2,0]$ with 
\begin{align*}
\gr^W_{-2}E&=\Z(-1)\\
\gr^W_{-1}E&=V\\
\gr^W_{0}E&=\Z(0).
\end{align*}
The natural map $\cB\to\cM\times_\cD\cM'$ is canonically an analytic $\Ext^1_{\Z MHS}(\Z(0),\Z(-1))\cong\G_m$ torsor which we call the biextension torsor; the associated analytic line bundle $\cP$ on $\cM\times_\cD\cM'$ we call the biextension bundle.  Viewing $\cM\to\cD$ as the universal intermediate Jacobian
 \[J(\cV):=\cV_\C/F^0\cV+\cV_\Z\]
 and $\cM'\to\cD$ as the dual $J(\cH^\vee)$, the biextension bundle $\cP$ is naturally thought of as the universal Poincar\'e bundle.  
 \begin{remark}
 While the total space $\cB$, the map to $\cM\times_\cD\cM'$, and the $\G_m$-action are all definable analytic, it is not clear that $\cB$ is a definable analytic $\G_m$-torsor as it is not clearly that it is definably locally trivial.
 \end{remark}

\begin{prop}\label{prop theta alg gen} Let $X$ be an algebraic space and $\phi: X\to \cM\times_\cD\cM'$ an admissible period map.  Then the pullback $B_X$ of the biextension torsor has a natural algebraic \'etale $\G_m$-torsor structure for which the map $B_X\to\cB$ is definable.
\end{prop}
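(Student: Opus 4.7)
The plan is to prove this by constructing the pullback line bundle $\mathcal{L}_X := \phi^*\cP$ associated to the biextension bundle $\cP$ as a definable analytic line bundle on the algebraic space $X$, and then appealing to definable GAGA \cite[Thm.\ 3.1]{bbt} to obtain algebraicity. Since an étale $\G_m$-torsor is equivalent to a line bundle, $B_X = \mathcal{L}_X^\times$ then inherits an algebraic structure, and the map $B_X \to \cB$ is definable by the construction of the pullback. The Remark preceding the proposition notes that definable local triviality may fail for $\cB$ itself on $\cM\times_\cD\cM'$; the key is that pulling back along an admissible period map from an algebraic source $X$ remedies this.

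First I would observe that $\cB$ is itself a graded-polarized mixed period space---namely the one parametrizing mixed Hodge structures of weights $[-2,0]$ with the specified gradeds $\Z(-1)$, $V$, $\Z(0)$---so by \cite{bbkt} it is $\R_{\alg}$-definable, and the projection $\cB\to \cM\times_\cD\cM'$ together with the $\G_m$-action are all definable. On the compact dual side, $\check{\cB}\to\check{\cM}\times_{\check{\cD}}\check{\cM}'$ is algebraically a $\G_a$-torsor (a single step in the tower of affine bundles describing the compact dual of a mixed period domain), hence Zariski-locally trivial. Restricting such a local algebraic section to the open semianalytic subset $M\times_D M'$ inside the compact dual (at the universal cover level) yields $\R_\an$-definable local sections of the underlying $\C$-torsor $B \to M\times_D M'$.

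Next I would pull back along $\phi$. Any point $x\in X$ has a definable simply-connected open neighborhood $U\subset X$ on which the local system underlying the variation is trivial after suitable shrinking. Using Corollary \ref{cor tfae} and the definability of $\phi$, we can lift $\phi|_U$ definably to $M\times_D M'$, then further to $B$ via the local section from the compact dual, and finally descend to $\cB$. This provides a definable section of $B_X$ over $U$. The transition functions between such sections take values in $\G_m$ and are obtained from differences of lifts (valued in $\C$) composed with the quotient $\C \to \G_m$ corresponding to the integral monodromy $\Z$ of the biextension factor. Since the exponential is $\R_{\an,\exp}$-definable on fundamental domains for $\Z \subset \C$, these transition functions are definable, so $B_X$ is a definable analytic $\G_m$-torsor on $X$ and $\mathcal{L}_X$ is a definable analytic line bundle.

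The main obstacle I anticipate is carefully justifying the descent from the universal cover to $X$: checking that the resulting $\G_m$-valued cocycle is genuinely $\R_{\an,\exp}$-definable, and that the various choices (simply-connected neighborhoods, lifts to $B$) assemble into a compatible definable datum. This is precisely the step requiring the o-minimal structure $\R_{\an,\exp}$ rather than merely $\R_\an$, reflecting the fact that $\cB$ arises as the quotient of an algebraic $\G_a$-torsor on the compact dual by the integral monodromy lattice. Once definable local triviality of $B_X$ is established, definable GAGA applied to $\mathcal{L}_X$ completes the argument.
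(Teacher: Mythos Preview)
Your approach is genuinely different from the paper's and contains a real gap at precisely the step you flag. You correctly produce a definable map $U\to B$ by lifting $\phi|_U$ to $M\times_D M'$ and composing with an algebraic section of the $\G_a$-torsor $\check{B}\to\check{M}\times_{\check{D}}\check{M}'$. The problem is the descent to $\cB$: the quotient map $B\to\cB$ is a covering by an infinite discrete group and is not definable, so for the composite $U\to\cB$ to be definable you need the image of $U$ in $B$ to meet only finitely many $\Gamma$-translates of a fundamental set. Because the simply-connected definable opens $U$ covering an algebraic space $X$ must reach out to the boundary at infinity, this finiteness is a nontrivial asymptotic statement about the biextension section as the underlying period map degenerates. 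Your remark that $\exp$ is definable on vertical strips presupposes exactly this boundedness rather than proving it. That one can choose sections of the biextension with controlled growth at the boundary is precisely the content of Brosnan--Pearlstein's meromorphic extension theorem, so an argument that purports to avoid that input but needs it here is circular. A secondary issue: even granting definable local triviality of $B_X$, definable GAGA \cite[Thm.~3.1]{bbt} over a non-proper $X$ only asserts that the essential image of algebraization is closed under subobjects and quotients; it does not say every definable line bundle is algebraic. You would still have to extend the transition functions meromorphically to a compactification, which is again exactly the Brosnan--Pearlstein step.

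The paper's proof avoids ever needing definable local triviality directly. It observes that it suffices to show the underlying definable analytic space $B_X$ is algebraic (the torsor structure and $\G_m$-action then come for free by pullback). For smooth $X$ this is Brosnan--Pearlstein's Theorem~\ref{thm theta alg smooth case}, with an independent o-minimal proof sketched in Remark~\ref{rmk new proof} via reduction to the Poincar\'e bundle on an honest family of abelian varieties. For reduced $X$ one resolves $\pi:Y\to X$, notes that $B_Y$ is algebraic by the smooth case and that $B_Y\to\cB$ is itself an admissible period map, and applies Proposition~\ref{prop images} to conclude that its image---which is $B_X$---is algebraic. The nonreduced case then follows from Proposition~\ref{prop nbhd} and definable GAGA applied to thickenings.
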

\begin{proof}We start by making some preliminary observations.  First, note that $B_X$ has a natural definable structure, as it is the base-change of $\cB$.  Thus, it suffices to show that the space underlying $B_X$ has an algebraic structure compatible with the definable structure.  Indeed, this algebraic structure is unique by definable GAGA, hence the naturality.  Since both the map to $X$ and the $\G_m$-action are pulled back from $\cB$, they are likewise algebraic, and as $B_X\to X$ clearly admits an fppf-local section (over $B_X$ for instance) it follows that it is an \'etale $\G_m$-torsor. 

Now to show that the underlying space of $B_X$ is algebraic we proceed by considering successively more general cases.
\subsubsection*{Step 1}  For $X$ smooth, the proposition is a result of Brosnan--Pearlstein:
\begin{thm}[Thm. 241 of \cite{bparch}]\label{thm theta alg smooth case}  In the above situation and assuming $X$ smooth, $B_X$ (as a sheaf) admits a natural meromorphic extension to any log smooth compactification $\bar X$ whose sections correspond to admissible liftings $\tilde\phi:X\to\cB$ of $\phi$.  In particular, $B_X$ is an \'etale $\G_m$-torsor.
\end{thm}

Note that this algebraic structure is indeed compatible with the definable structure:  \'etale locally the map $B_X\to \cB$ is identified with 
\[B_{X}\cong \G_m\times X\to\G_m\times \cB\to\cB\]
where the left isomorphism is induced by a local section of $B_{X}\to X$, the middle map comes from the corresponding admissible lift $X\to \cB$, and the right map is the action.  

\subsubsection*{Step 2} For $X$ reduced, by taking the closure of the image we may assume $\phi:X\to \cM\times_\cD\cM'$ is a closed immersion by Proposition \ref{prop images}.  Let $\pi:Y\to X$ be a resolution.  By the previous step, $B_{Y}$ is algebraic and the natural map $B_{Y}\to\cB$ is an admissible period map.  It follows by Proposition \ref{prop images} again that the image of $B_{Y}\to\cB$ is algebraic, and this is just the underlying space of $B_X$.  

\subsubsection*{Step 3} For general $X$ we may still assume $\phi:X\to \cM\times_\cD\cM'$ a closed immersion.  By the previous step $B_{X^\reduced}$ is algebraic, and by Proposition \ref{prop nbhd} and definable GAGA we conclude that the total space $B_X$ is algebraic, hence $B_X$ is algebraic.

\end{proof}

\begin{cor}  In the setup of the proposition, the pullback $P_X$ of the biextension bundle is naturally an algebraic line bundle.
\end{cor}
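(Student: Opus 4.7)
The plan is to deduce this corollary directly from the proposition by invoking the standard equivalence between \'etale $\G_m$-torsors and line bundles. Recall that for any algebraic space $X$, the functor sending an algebraic line bundle $L$ on $X$ to the complement of its zero section (or equivalently to its frame bundle) establishes an equivalence of categories between algebraic line bundles on $X$ and algebraic \'etale $\G_m$-torsors on $X$, with inverse given by the associated bundle construction $B\mapsto B\times^{\G_m}\Abb^1$. The same equivalence holds in the definable analytic category, and both equivalences are compatible via the definabilization functor.

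Concretely, I would first invoke Proposition \ref{prop theta alg gen} to obtain the natural algebraic \'etale $\G_m$-torsor structure on $B_X$ together with the fact that the comparison map $B_X\to\cB$ is definable. Then I would form the associated line bundle
\[P_X^{\mathrm{alg}}:=B_X\times^{\G_m}\Abb^1,\]
which is an algebraic line bundle on $X$ by construction. On the other hand, $\phi^*\cP$ has an a priori definable analytic line bundle structure coming from the analytic Poincar\'e bundle on $\cM\times_\cD\cM'$, which is obtained from $\cB$ by exactly the same associated bundle construction.

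Finally, I would verify that the two structures agree: the canonical analytic isomorphism $B_X\times^{\G_m}\Abb^1\cong \phi^*\cP$ is definable because the map $B_X\to\cB$ is definable and the associated bundle construction is functorial. Thus $P_X$ carries a definable analytic line bundle structure which coincides with an algebraic one, and by definable GAGA \cite[Theorem 3.1]{bbt} this algebraic structure is unique and natural. The main (and essentially only) obstacle has already been handled in Proposition \ref{prop theta alg gen}; what remains here is the routine torsor-to-line-bundle translation, so I would keep the exposition short.
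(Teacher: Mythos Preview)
Your proposal is correct and matches the paper's intent: the paper states the corollary without proof, treating it as immediate from Proposition~\ref{prop theta alg gen} via exactly the standard $\G_m$-torsor/line-bundle equivalence you describe. Your explicit invocation of definable GAGA at the end is harmless but slightly redundant, since the naturality already comes from the naturality of the algebraic torsor structure in the proposition together with functoriality of the associated-bundle construction.
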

Note that we have a natural definable map
\[\sigma:J(\cV)\to J(\cV^\vee)\]
commuting with the projection to $\cD$ which on fibers is the map $\Ext^1_{\Z MHS}(\Z(0),V)\to\Ext^1_{\Z MHS}(\Z(0),V^\vee)$ coming from the polarizing form $q:V\to V^\vee$.

\begin{defn}  Let $X$ be an algebraic space and $\phi:X\to\cM$ an admissible period map.  The line bundle $\Theta_X$ on $X$ which is the pullback of $\cP$ along $\phi\times(\sigma\circ\phi):X\to\cM\times_\cD\cM'$ endowed with the natural algebraic structure of Proposition \ref{prop theta alg gen} is the \emph{theta bundle} of $\phi$.  
\end{defn}

\begin{prop}  Let $X$ be an algebraic space, $V$ a polarized pure Hodge structure of weight $-1$, and $\phi:X\to J(V)$ an admissible quasifinite period map.  Then the image $\phi(X)$ is contained in a translate of a subtorus which is an abelian variety.
\end{prop}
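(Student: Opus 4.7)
The starting point is that for $V$ a polarized pure Hodge structure of weight $-1$, the intermediate Jacobian $J(V) = V_\C/(F^0V + V_\Z)$ is not merely a compact complex torus but a genuine abelian variety: the Riemann form coming from the polarization produces a positive Hermitian form on $V^{-1,0}$, which is equivalent to saying the theta bundle $\Theta$ of \S\ref{sect theta} is an ample algebraic line bundle on $J(V)$. In particular, the period space $\cM = J(V)$ carries a canonical algebraic structure compatible with its definable analytic one.

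With this in hand, my first step is to apply Proposition \ref{prop images} to the admissible period map $\phi: X \to J(V)$. This produces a factorization $\phi = \iota \circ f$ with $f: X \to Y$ dominant algebraic and $\iota: Y \hookrightarrow J(V)$ a closed immersion of definable analytic spaces. Since $J(V)$ is compact, so is $Y$; being a separated algebraic space of finite type over $\C$ that is analytically compact, $Y$ is proper. Moreover $\iota^*\Theta$ is the restriction of the ample bundle $\Theta$, so $Y$ is in fact projective.

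The next step is to identify $\iota(Y)$ as a closed algebraic subvariety of the abelian variety $J(V)$. This is a GAGA statement: either directly, the closed analytic subset $\iota(Y) \subset J(V)$ of a projective variety is algebraic by Chow's theorem; or equivalently, the analytic closed immersion $\iota$ of proper algebraic spaces is algebraic since both source and target are projective. Either way, $\phi(X) = \iota(Y)$ is a closed subvariety of the abelian variety $J(V)$.

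The conclusion is then a standard fact: any closed subvariety $W$ of an abelian variety $A$ is contained in a translate of an abelian subvariety. Concretely, pick $w_0 \in W$; the Zariski closure in $A$ of the subgroup generated by $W - w_0$ is a closed algebraic subgroup whose identity component $B$ is an abelian subvariety containing $W - w_0$, so $W \subset w_0 + B$. Applied to $W = \iota(Y)$, this finishes the argument. I expect the main subtlety to lie in the GAGA step, namely matching the closed definable analytic embedding of $Y$ in the period space $J(V)$ with an algebraic closed embedding into the abelian variety $J(V)$; this compatibility is packaged by Proposition \ref{prop nbhd} together with definable GAGA. The quasifiniteness hypothesis on $\phi$ plays only an auxiliary role in this approach, mostly ensuring that the image is of the expected dimension.
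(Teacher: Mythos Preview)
Your argument rests on the claim that $J(V)$ is an abelian variety because the polarization yields a positive Hermitian form, but this is false in general. For a polarized pure Hodge structure $V$ of weight $-1$ with level $>1$ (i.e.\ with nonzero $V^{p,q}$ for $|p|>1$), the intermediate Jacobian $J(V)$ is only a compact complex torus, not algebraizable: the Hermitian form $q(u,\bar v)$ is positive on $F^{-1}V/F^0V$ but indefinite on the full tangent space $V_\C/F^0V$. This is exactly what the introduction says---the theta bundle is ``positive in Griffiths transverse directions''---and it is the classical reason why intermediate Jacobians of higher-dimensional varieties (e.g.\ cubic threefolds only barely, general threefolds not at all) need not be abelian varieties. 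Once this fails, so does your Chow/GAGA step, and the final ``subvariety of an abelian variety'' argument becomes vacuous anyway (you could always take $B=A$).

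The paper's proof uses precisely the ingredients you discarded. After reducing via Proposition~\ref{prop images} and the difference map to the case where $X\subset J(V)$ is a closed subgroup, Griffiths transversality forces the complex span $H_\C$ of the monodromy image $H_\Z\subset V_\Z$ to lie in $F^{-1}V$. Definability (hence compactness of $X$) then gives $H_\R\cong H_\C/(F^0V\cap H_\C)$, so $H_\Z$ underlies a polarized sub-Hodge structure of level one, and \emph{that} is why the subtorus $X=H_\C/(F^0V\cap H_\C+H_\Z)$ is an abelian variety. The quasifiniteness and admissibility enter through Griffiths transversality and definability, which are doing the real work---not an a priori algebraicity of $J(V)$.
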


\begin{proof}  By Proposition \ref{prop images} we may assume $X$ is proper, reduced, irreducible, and a closed subspace $X\subset J(V)$ containing the split point $0\in J(V)$.  By replacing $X$ with the image of the difference map $X\times X\to J(V)$ we may eventually assume $X$ is a sub-group.  If $H_\Z$ is the image of the monodromy $H_1(X,\Z)\to V_\Z$ and $H_\C\subset V_\C$ the complex span, then $X=H_\C/F^0V\cap H_\C+H_\Z$.  The tangent bundle of $J(V)$ is canonically $V_\C/F^0V$ and the Griffiths transverse subbundle is $F^{-1}V/F^0V$, so we have $H_\Z\subset F^{-1}V$.  As $X$ is definable, it must be a compact real torus, so we must have $H_\R\cong H_\C/F^0V\cap H_\C$ via the quotient map.  It follows that $H_\Z$ underlies a polarized sub Hodge structure of level one.  
\end{proof}

\begin{cor}\label{cor theta ample one}  In the setup of the proposition, $\Theta_X$ is ample.
\end{cor}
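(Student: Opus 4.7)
The plan is to reduce to the classical ampleness of the theta line bundle on a polarized abelian variety. First, by Proposition \ref{prop images} I would factor $\phi$ as $X \xrightarrow{f} Y \hookrightarrow J(V)$ with $f$ dominant algebraic and the second arrow a closed immersion; since $\phi$ is quasifinite and a closed immersion is injective, $f$ is then quasifinite.

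By the preceding proposition (applied after replacing $X$ by its image $Y$ and performing the symmetric-difference reduction carried out in that proof), the reduction $Y^{\reduced}$ is a translate of an abelian subvariety $A = J(H) \subset J(V)$, where $H_\Z \subset V_\Z$ underlies a polarized sub-Hodge structure of level one. By construction of the theta bundle (as the pullback of the biextension bundle along $(\mathrm{id}, \sigma)$), its restriction $\Theta|_A$ is the line bundle on $A$ whose first Chern class is the alternating form on $H_\Z$ induced by the restriction of the polarization $q$ to $H$. Since the restriction of a polarization to a sub-Hodge structure is again a polarization, the Riemann bilinear relations hold, and hence $\Theta|_A$ is ample on the polarized abelian variety $A$ by the classical Appell--Humbert theorem.

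To conclude ampleness of $\Theta_X$, I would proceed in two steps. First, $\Theta|_{Y^{\reduced}}$ is the restriction of $\Theta|_A$ to the closed subspace $Y^{\reduced} \subset A$, hence ample; using Proposition \ref{prop nbhd} to realize $Y$ as a nilpotent thickening of $Y^{\reduced}$ inside $\cM$, combined with the fact that ampleness of a line bundle on a Noetherian algebraic space is detected on its reduction, I obtain that $\Theta_Y$ is ample on $Y$. Second, for the quasifinite $f: X \to Y$, I would invoke Zariski's main theorem in the algebraic space setting to factor $f$ as an open immersion followed by a finite morphism $X \hookrightarrow \tilde X \to Y$; finite pullbacks preserve ampleness, and ampleness persists under restriction to open subspaces, so $\Theta_X = f^*\Theta_Y$ is ample.

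The main obstacle is verifying that $q|_H$ is a genuine polarization with the positivity needed for Appell--Humbert---but this is essentially the content of the preceding proposition (which delivers $A = J(H)$ as an algebraic abelian variety), so the heavy lifting has already been done there. The remaining ingredients (Zariski's main theorem for algebraic spaces and the standard interactions of ampleness with closed immersions, open immersions, and finite pullbacks) are routine.
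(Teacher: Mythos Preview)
Your argument is correct and follows the same approach as the paper: the theta bundle on $J(V)$, being associated to the hermitian form $q(u,\bar v)$, restricts to the classical ample theta bundle on the polarized abelian variety $A=J(H)$, and this pulls back to an ample bundle on $X$. The paper's proof is the one-line identification of $\Theta|_A$ with the usual theta bundle and leaves the passage back to $X$ implicit; your version just spells out those formal steps (note the detour through Proposition~\ref{prop nbhd} is unnecessary, as ampleness on a Noetherian space is detected on its reduction, and your phrasing ``$Y^{\reduced}$ is a translate of $A$'' should read ``is contained in a translate of $A$''---though your subsequent use of $Y^{\reduced}\subset A$ shows you have this right).
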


\begin{proof}
The theta bundle on $J(V)$ is clearly the line bundle associated to the hermitian form $q(u,\ol v)$, and restricts to the usual theta bundle of $H_\C/F^0H+H_\Z$.
\end{proof}

\begin{remark}\label{rmk new proof}Considerations as in the previous proposition can be used to give a new proof of Theorem \ref{thm theta alg smooth case} as follows.  Consider a diagram
\[\xymatrix{
X\ar[r]\ar[d]&\cM\times_\cD\cM'\ar[d]\\
Y\ar[r]&\cD
}\]
where the horizontal maps are Griffiths transverse closed immersions and $X,Y$ are reduced.  After base-changing along an \'etale map $Y'\to Y$ with dense image, $X':=X\times_YY'\to Y'$ admits a section.  As in the proof of the proposition, using Proposition \ref{prop images} we may replace $X'$ with the image of the difference map $X'\times_{Y'}X'\to \cM\times_\cD \cM'$, and after finitely many iterations we may assume (after shrinking $Y'$) there is a factorization 
\[\xymatrix{
X'\ar[rd]\ar[r]&A\times_{Y'}A^\vee\ar[r]\ar[d]&\cM\times_\cD\cM'\ar[d]\\
&Y'\ar[r]&\cD
}\]
where $A\to Y'$ is a smooth definable analytic family of polarized abelian varieties whose fibers are the subgroups generated by the corresponding fibers of $X'\to Y'$.  Then $A$ is pulled back along a definable hence algebraic map of $Y'$ to a Shimura variety and the universal Poincar\'e bundle is algebraic, so $A$, $B_{A\times_{Y'}A^\vee}$, and therefore $B_{X'}$ are all algebraic.  The closure of the image of $B_{X'}$ in $\cB$ is $B_X$, and therefore algebraic by Proposition \ref{prop images}.

\end{remark}

While the argument in the remark directly shows that $B_X$ is algebraic on a stratification, the global definable structure is needed to glue these algebraic structures together since the fiber dimension may jump, as shown in the example(s) below.

It is not hard to provide examples where fiber dimensions jump, even in the mixed Shimura setting. In fact, consider the universal abelian variety $\mathcal{X}_g$ over the moduli stack 
$\A_g$ (one may add level structure to rigidify everything into schemes). Now one may simply take a curve $C$ inside a fiber over $\A_g$, and a generic surface $S$ containing it.
This is an example of a mixed variation of weights 0,1 with jumping fiber dimensions over the associated graded.

We give below what we consider a more interesting example, where the fibers over the graded are generically finite simply for lack of hodge classes in the associated pure variation, but then over points in the graded which acquire hodge classes, the fiber dimension jumps. This kind of example is harder to construct ``artificially'' in the manner above, and appears to be
a more intrinsic geometric phenomenon.

\begin{eg}  Let $K$ be a sufficiently high level cover of the moduli space of K3 surfaces polarized by the lattice $\mat{0}{2}{2}{0}$, so that it is an irreducible quasiprojective variety.  Let $f_1,f_2$ be the divisor classes of the two elliptic pencils and let $S$ be the moduli space of pairs $(X,E)$ with $X\in K$ and $E$ a smooth section of $f_1$.  $S$ is also irreducible and admits a forgetful map $S\to K$.  Consider the cohomology of the complement $H^2(X\backslash E,\Z)$ which sits in an extension
\[0\to H^2(X,\Z)/\Z f_1\to H^2(X\backslash E,\Z)\to H^1(E,\Z)(-1)\to 0.\]
Let $H^2_v(X,\Z):=H^2(X,\Z)/(\Z f_1+ \Z f_2)$, which yields an extension 
 \[0\to H^2_v(X,\Z)\to H^2_v(X\backslash E,\Z)\to H^1(E,\Z)(-1)\to 0.\]

Let $\phi:S\to \cM$ be the resulting mixed period map for the variation $H^2_v(X\backslash E,\Z)$, and $\gr\phi: S\to \cD$ that of the associated graded.  It is easy to see that $H^2_v(X,\Z)\otimes H^1(E,\Z)^\vee(1)$ generically has no nontrivial sub Hodge structures, so the generic fiber of $\gr\phi$ is 0-dimensional.  On the other hand, if $X$ is the Kummer surface of $E\times E'$ with the elliptic pencil given by the first factor $E\times\{0\}$, then for $p\in E\backslash E[2]$ we have 
\begin{align*}
H^2(X\backslash E\times\{p\},\Q)&=H^2(\mathrm{Bl}_{E[2]\times E'[2]}(E\times E'\backslash\{\pm p\}),\Q)^{\pm 1}\\
&=\Q(-1)^{17}\oplus H^1(E,\Q)\otimes H^1(E'\backslash\{\pm p\},\Q) 
\end{align*}
so the associated graded is constant on the fiber of $S\to K$ above $X$.
\end{eg}

\section{Setup for the proof of quasiprojectivity}\label{sect setup}

In this section we collect several results that will be needed for the proof of the quasiprojectivity part of Theorem \ref{thm main}.  The first is an ampleness criterion in terms of definable sections; the second allows us to endow the cohomology groups of variations of mixed Hodge structures with mixed Hodge structures over arbitrary bases; the third gives some control on the monodromy of extensions of variations of mixed Hodge structures, again over arbitrary bases.

\subsection{Definable-analytically quasiaffine maps}
We first show that to prove $f$-ampleness of an algebraic line bundle $L$ and an algebraic map $f:X\to Y$, it suffices to show the definable stalks of $f_*L^n$ separate points.  Note that this is weaker than the assumption that $f_*L^n$ separates points definably locally on $Y$.
\begin{prop}\label{lemma definably ample}Let $f:X\to Y$ be a map of reduced algebraic spaces, $L$ a line bundle on $X$.  Assume for any point $y\in Y$ and any 0-dimensional subspace $P\subset X$ supported on the fiber $X_y$ above $y$ that the restriction on stalks
\[(f^\df_*(L^n)^\df)_y\to (f^\df_*(L^n|_P)^\df)_y\]
is surjective for $n\gg 1$.  Then $L$ is $f$-ample.
\end{prop}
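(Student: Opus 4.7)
The plan is to argue locally around a fixed point $y\in Y$, producing for $n\gg 1$ a Zariski open neighborhood $U\ni y$ over which $L^n|_{f^{-1}(U)}$ is very ample.

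First, I would apply the hypothesis to three families of $0$-dimensional subspaces $P\subset X_y$: single reduced points, disjoint unions of two reduced points, and length-$2$ thickenings of reduced points. Surjectivity of $(f^\df_*(L^n)^\df)_y\to (f^\df_*(L^n|_P)^\df)_y$ for these $P$ and $n\gg 1$ yields definable analytic sections $s_0,\ldots,s_N$ of $L^n$ on $f^{-1}(V)$ for some definable open neighborhood $V$ of $y$, which are base-point free on $X_y$ and separate its geometric points and tangent directions. These sections assemble into a definable analytic morphism $\phi_n\colon f^{-1}(V)\to \mathbb{P}^N\times V$ over $V$ which restricts to a closed immersion on $X_y$, and by openness is a locally closed immersion in some definable neighborhood of $X_y$. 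In particular $L^n|_{X_y}$ is very ample on each fiber.

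The core step is to promote this definable immersion to an algebraic one over a Zariski open. I would consider the graph $\Gamma_n\subset X\times\mathbb{P}^N$ of $\phi_n$ as a definable analytic subspace (bounded since $\mathbb{P}^N$ is compact), and invoke definable GAGA from \cite[Theorem 3.1]{bbt}: closed definable analytic subspaces of the algebraic space $X\times\mathbb{P}^N$ are algebraic. From the algebraic structure on the closure of $\Gamma_n$ one recovers an algebraic morphism $f^{-1}(U)\to\mathbb{P}^N\times U$ over some Zariski open $U\ni y$ pulling $\O(1)$ back to $L^n$, giving $f$-very-ampleness of $L^n$ on $f^{-1}(U)$.

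The principal obstacle is precisely this algebraization. In the complex algebraic setting a definable open neighborhood of $y$ in $Y$ need not contain any Zariski open, so one cannot naively restrict to an algebraic base. The argument must therefore go through the global algebraicity of $\Gamma_n$ (or equivalently, of the sheaf map $\O^{N+1}\to L^n$) as a definable analytic object on an ambient algebraic space, and one must verify closedness of $\Gamma_n$ in $X\times\mathbb{P}^N$ — likely via a compactification of $X$ or properness of $\phi_n$ near the fiber — in order for definable GAGA to apply. Once algebraicity is in hand, standard arguments transfer the fiberwise very-ampleness to relative very-ampleness on a Zariski neighborhood of $y$.
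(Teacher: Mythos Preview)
Your approach has a genuine gap at the algebraization step. The graph $\Gamma_n$ is only a closed definable analytic subspace of $f^{-1}(V)\times\mathbb{P}^N$, where $V$ is a definable (not Zariski) open neighborhood of $y$; it is \emph{not} closed in $X\times\mathbb{P}^N$, so definable GAGA does not apply as you state it. You gesture at closing up in a compactification, but even granting that the closure $\bar\Gamma_n\subset Z\times\mathbb{P}^N$ is analytic (via Bishop) and then algebraic, nothing ensures the projection $\bar\Gamma_n\to Z$ is an isomorphism over any Zariski open containing $X_y$: the closure can acquire components over $X\setminus f^{-1}(V)$ and need not be a graph there. There is also a second problem you pass over: to obtain finitely many sections $s_0,\ldots,s_N$ of a single $L^n$ that globally separate points and tangents on $X_y$, you are implicitly using compactness of $X_y$, but $f$ is not assumed proper and the hypothesis gives surjectivity only for one finite $P$ at a time, with $n$ depending on $P$.

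The paper's argument sidesteps both issues. It first reduces, via Zariski's main theorem, to showing the \emph{algebraic} stalk map $(f_*L^n)_y\to (f_*(L^n|_P))_y$ is surjective for each fixed $P$ and $n\gg 1$, so no uniformity in $P$ is needed. Then, instead of algebraizing a morphism, it algebraizes the individual sections: choose a relative compactification $g\colon Z\to Y$ with $S=Z\setminus X$ and an extension $M$ of $L^n$; each definable section of $L^n$ on $f^{-1}(U)$ extends \emph{meromorphically} to $Z$ (close up its graph in $Z\times\mathbb{P}^1$, which is analytic by Bishop since definable sets have locally bounded volume), hence gives an analytic section of $\sHom(I_S^m,M)$ for some $m\gg 0$. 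Now $g$ is proper, so \emph{ordinary} GAGA --- not definable GAGA --- converts analytic stalk surjectivity of $(g_*\sHom(I_S^m,M))_y\to (g_*L^n|_P)_y$ into algebraic stalk surjectivity, and restricting back to $X$ gives what is needed.
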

\begin{proof}By Zariski's main theorem, it is enough to show for all $y$ and $P$ as in the statement of the theorem that the restriction map
\[(f_*(L^n))_y\to (f_*(L^n|_P))_y\]
is surjective for $n\gg1$.  Let $g:Z\to Y$ be a relative compactification of $X$, so $g$ is proper and there is an open immersion $X\to Z$ over $Y$.  Let $S$ be the complement of $X$ in $Z$.  By assumption there is an $n$, an analytic open neighborhood $U\subset Y^\an$ of $y$ and finitely many sections of $(L^n)^\df(f^{-1}(U))$ separating $P$, since $(f^\df_*(L^n|_P)^\df)_y$ is a finite-dimensional vector space.  We may assume there is a line bundle $M$ extending $L^n$, and by the following lemma definable sections extend meromorphically.
\begin{lem}  Let $Z$ be a reduced definable analytic space and $S\subset Z$ a closed definable analytic subspace.  Any definable analytic $f:Z\backslash S\to \C$ extends meromorphically to $Z$. 
\end{lem}
\begin{proof}The closure of the graph $\Gamma(f)\subset (Z\backslash S)\times\C$ in $Z\times\PP^1$ is definable and analytic by for example Bishop's theorem \cite[Theorem 3]{bishop}, as definable sets have locally bounded volume.
\end{proof}
It thus follows that
\[(g^\an_*(\sHom(I_S^m,M)^\an)_y\to (g^\an_*(L_P^n)^\an)_y\]     
is surjective for $m\gg0$, and by ordinary GAGA this means the horizontal map below is surjective, finishing the proof.
\[\xymatrix{
(g_*(\sHom(I_S^m,M))_y\ar[rd]\ar[rr]&& (g_*L_P^n)_y\\
&(f_*L^n)_y\ar[ur]&
}\]  
\end{proof}
Lemma \ref{lemma definably ample} provides a particularly easy criterion for $X\to Y$ to be quasiaffine. 
\begin{defn}We say that a map $X\to Y$ of definable analytic spaces is definable-analytically quasiaffine if \emph{analytically} locally on $Y$ it factors as
\[\xymatrix{
X\ar[rd]\ar[r]^\iota&\C^N\times Y\ar[d]^{\pi_2}\\
&Y
}\]
where $\iota$ is a definable analytic locally closed immersion and $\pi_2$ the second projection.
\end{defn}
Recalling that an algebraic map $X\to Y$ is quasiaffine if and only if $\O_X$ is relatively ample \cite[II.5.1.6]{egaii}, we have:
\begin{cor}\label{cor qa crit}Let $f:X\to Y$ be a map of algebraic spaces which is definable-analytically quasiaffine.  Then $f$ is quasiaffine.
\end{cor}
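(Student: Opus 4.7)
The plan is to apply Proposition \ref{lemma definably ample} with $L = \O_X$: since $L^n = \O_X$ for all $n$, verifying the definable-stalk surjectivity hypothesis of that proposition will yield that $\O_X$ is $f$-ample, and hence that $f$ is quasiaffine by the EGA~II.5.1.6 criterion already recalled in the excerpt.

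Fix $y \in Y$ and a $0$-dimensional closed subspace $P \subset X$ supported on $X_y$. Using the definable-analytic quasi-affineness hypothesis, and shrinking an analytic neighborhood $U$ of $y$ to a definable analytic open neighborhood, I obtain a definable analytic locally closed immersion $\iota : f^{-1}(U) \hookrightarrow \C^N \times U$ over $U$. Pulling back the coordinate functions of $\C^N$ along $\iota$ produces definable analytic sections $s_i := z_i \circ \iota \in \O_X^{\df}(f^{-1}(U))$ which define elements of $(f_*^{\df}\O_X^{\df})_y$, and which separate the finitely many support points of $P$ because $\iota$ is injective on them.

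Because $\iota$ is a locally closed immersion, for each support point $x$ of $P$ the stalk map $\O_{\C^N \times U, \iota(x)} \twoheadrightarrow \O_{X, x}$ is surjective, and hence so is the composition to $\O_{P, x}$. Since $\O_P$ is a finitely generated Artinian $\O_{Y,y}$-module, a standard interpolation argument (for instance via the Chinese remainder theorem applied to the finite product $\bigoplus_x \O_{P,x}$) shows that every element of $H^0(P, \O_P) = (f_*^{\df}\O_P^{\df})_y$ is the restriction of a finite polynomial in the $z_i$ with coefficients pulled back from $\O_Y$. Translating through $\iota$, every such element lifts to the corresponding polynomial in the $s_i$ with coefficients in $f^* \O_Y$, verifying the hypothesis of Proposition \ref{lemma definably ample}.

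There is no substantive obstacle here; the one small point requiring care is that the sections $s_i$ genuinely live in the definable sheaf on $f^{-1}(U)$, which is immediate since definable-analytic quasi-affineness is defined in the definable analytic category. One then invokes Proposition \ref{lemma definably ample} to conclude that $\O_X$ is $f$-ample, and hence that $f$ is quasiaffine.
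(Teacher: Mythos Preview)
Your proof is correct and follows exactly the approach the paper intends: the paper states the corollary immediately after recalling the EGA criterion that $f$ is quasiaffine iff $\O_X$ is $f$-ample, leaving the verification of the hypothesis of Proposition~\ref{lemma definably ample} implicit, and you have simply spelled out that verification. One small point worth noting is that Proposition~\ref{lemma definably ample} is stated for reduced $X$ and $Y$, so strictly speaking the corollary inherits that hypothesis (and indeed the paper reduces to the reduced case before invoking it in the proof of Proposition~\ref{prop induct step}).
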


\subsection{Hodge modules and period maps}  To equip the cohomology of variations of mixed Hodge structures over arbitrary bases with functorial mixed Hodge structures, we will rely crucially on Saito's formalism of mixed Hodge modules \cite{saito88,saito90}.  Briefly, for any reduced algebraic space $X$ there is an abelian category $\MHM(X)$ of graded polarizable mixed Hodge modules and a faithful functor
\[\rat:D^b\MHM(X)\to D^b_c(\Q_X)\] 
which is exact with respect to the perverse $t$-structure and such that the usual functors $Rf_*, f^*, f_!, f^!,\otimes^L,R\sHom$ on derived categories of constructible sheaves lift to functors $f_*, f^*, f_!, f^!,\otimes,\sHom$.  For $X$ smooth, a mixed Hodge module consists of a filtered $D$-module $M$ and a $\Q$-perverse sheaf $P$ with a quasi-isomorphism $\mathrm{DR}(M)\xrightarrow{\cong}P_\C$ where $\mathrm{DR}(M)$ is the de Rham complex of $M$, while in the general case they are patched together from such objects via local embeddings into smooth ambient spaces.  

\begin{defn}For $X$ a reduced algebraic space we say $E\in D^b\MHM(X)$ is \emph{smooth} if its underlying rational structure is a local system in degree 0 (with respect to the standard $t$-structure on $D^b_c(\Q_X)$). 
\end{defn}

For smooth $X$, there is a natural equivalence of categories \cite[Theorem 3.27]{saito90}
\begin{equation}\left\{\parbox{5.5cm}{\centering admissible variations of rational mixed Hodge structures on $X$}\right\}\label{eq equiv}\to\left\{\mbox{smooth objects of $D^b\MHM(X)$}\right\}\end{equation}
which is compatible with pull-backs along algebraic maps $f:X\to Y$.  

\begin{prop}\label{prop mhm}  The functor \eqref{eq equiv} uniquely extends to a fully faithful functor for any reduced algebraic space $X$ which is compatible with pull-backs along algebraic maps $f:X\to Y$.  If $X$ is moreover seminormal then the extension is an equivalence of categories.
\end{prop}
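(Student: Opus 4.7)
The plan is to define the functor by descent along a smooth proper hypercover. Given a reduced algebraic space $X$, I would take a smooth simplicial resolution $\pi_\bullet : X_\bullet \to X$, which exists by resolution of singularities. For an admissible variation of mixed Hodge structures $\mathcal{V}$ on $X$, each pullback $\pi_n^*\mathcal{V}$ is an admissible variation on the smooth $X_n$, so by the equivalence \eqref{eq equiv} we obtain a simplicial system of smooth objects $M_n \in D^b\MHM(X_n)$ with compatibility data. The key input is that $D^b\MHM$ satisfies proper cohomological descent---a consequence of Saito's six-functor formalism together with the fact that $\id \to R\pi_*\pi^*$ is an equivalence for a proper surjection---so this descent datum glues to an object $M(\mathcal{V}) \in D^b\MHM(X)$. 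Its underlying rational constructible complex is a local system on $X$ (namely the one already underlying $\mathcal{V}$), so $M(\mathcal{V})$ is smooth; independence of the hypercover is a standard common refinement argument, and compatibility with algebraic pullbacks $f : X \to Y$ follows by choosing compatible hypercovers of $X$ and $Y$.

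For full faithfulness on a general reduced $X$, the group $\Hom_{D^b\MHM(X)}(M(\mathcal{V}), M(\mathcal{W}))$ is computed by descent as the equalizer of the $\Hom$-groups along the simplicial levels of $X_\bullet$, and by the smooth case these levels compute $\Hom$-groups of admissible variations on each $X_n$. Since both the rational local system and its Hodge and weight filtrations are defined already on $X$, such a compatible family is precisely a morphism of admissible variations on $X$, giving the desired bijection on $\Hom$-groups.

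The main obstacle is essential surjectivity when $X$ is seminormal: a smooth $E \in D^b\MHM(X)$ pulls back to an admissible variation on $X_0$ together with descent data on $X_1$, and one must show these descend to an admissible variation on $X$ itself. I would argue that seminormality is exactly what makes this work. Topologically, local systems on $X$ correspond to local systems on $X_0$ with compatible restrictions on $X_1$ because seminormal complex analytic spaces are determined by their normalizations together with gluing data along branches. On the coherent side, the Hodge and weight filtration pieces, viewed as algebraic subsheaves of an ambient locally free sheaf that is already defined on $X$ (being the one associated to the already-descended rational local system), descend because $\O_X$ is precisely the subsheaf of $\pi_{0*}\O_{X_0}$ consisting of functions agreeing on $X_1$---this is the algebro-geometric content of seminormality. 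Admissibility on $X$ is then automatic, since the admissibility is a property that can be checked after pullback to a log resolution. The subtle point to verify carefully is the coherent descent for the filtration pieces and the fact that this descent fails in a controllable way over the seminormalization for non-seminormal $X$, explaining why full essential surjectivity requires the seminormal hypothesis.
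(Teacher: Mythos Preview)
Your descent approach is in principle viable but two steps are not justified as written.  The key input you cite---that $\id\to R\pi_*\pi^*$ is an equivalence for a proper surjection $\pi$---is false for a single map (already $R\pi_*\Q_{X'}$ has higher cohomology for a resolution); even the correct statement for the augmented hypercover $a:X_\bullet\to X$, namely $\id\cong Ra_*a^*$, only gives full faithfulness of $a^*$, not \emph{effectivity}.  You still need to show that a cartesian system on $X_\bullet$ descends to an object of $D^b\MHM(X)$, and producing the required totalization in a bare triangulated category is not automatic from the six-functor package: it needs either an enhancement or a careful finite-truncation argument, neither of which you supply.  In the seminormal case, your characterization---that $\O_X$ equals the equalizer of $\pi_{0*}\O_{X_0}\rightrightarrows\pi_{1*}\O_{X_1}$ for a resolution---is not the definition of seminormality (which concerns the normalization, a finite map) but a consequence that already passes through weak normality of the analytification; your coherent-descent argument for the filtration pieces is thus circuitous and leaves the actual mechanism unstated.

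The paper's route is different and more hands-on.  For existence it uses a single resolution $\pi:X'\to X$ with singular locus $S$: by induction on dimension $E^H_S$ exists, and one defines $E$ on $X$ via the triangle $E\to\pi_*E^H_{X'}\to E^H_S\otimes A\to E[1]$ where $A$ is the cone of $\Q_X^H\to\pi_*\Q_{X'}^H$---only finitely many cones, no simplicial machinery.  Uniqueness and full faithfulness come from a separate short lemma: for smooth $E,F$ and any dense open $j:U\to X$, $\Hom(E,F)\cong\Hom(j^*E,j^*F)$, since both equal the weight-$0$ Hodge classes of the mixed Hodge structure on $H^0(X,\sHom(\cE,\cF))$.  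For the seminormal statement the paper argues via period maps directly: the pulled-back variation on the resolution $X'$ gives $\phi:X'\to\cM$, which is pointwise constant on fibers of $\pi$ (the filtered $D$-module underlying $E$ already lives on $X$), and weak normality of $X^\an$ then lets $\phi$ factor through $X$---bypassing coherent descent of the filtration entirely.
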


In particular, to every admissible period map $X\to\cM$ we obtain a ``pullback object"\footnote{$E^H_X$ of course depends on $\phi$.} $E^H_X\in D^b\MHM(X)$ whose underlying rational structure is the local system $\cE_X$.

\begin{proof}The uniqueness and functoriality are consequences of the uniqueness and functoriality for smooth $X$ and the following fact: 
\begin{lemma}\label{lemma mhm morph} Let $X$ be a reduced algebraic space. For smooth $E,F\in D^b\MHM(X)$ and any dense open set $j:U\to X$ we have 
\[\Hom(E,F)\cong \Hom(j^*E,j^*F)\]
  via the natural map.  
  \end{lemma}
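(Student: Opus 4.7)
My plan is to reinterpret both sides via the cohomology of the internal Hom and then reduce the comparison to a statement about $\pi_1$-invariants of a local system. The key setup: for smooth $E, F \in D^b\MHM(X)$, the internal Hom $V := R\cHom(E,F)$ is again a smooth MHM with underlying rational structure the local system $\cHom(\rat E, \rat F)$ in degree $0$. This is because on the rational level the derived Hom of two local systems in degree $0$ is just $\cHom(\rat E, \rat F)$ in degree $0$ (local systems are locally free, so higher $\Ext$ sheaves vanish), and Saito's formalism lifts this uniquely to a smooth MHM.

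By $\otimes$--Hom adjunction and pushforward along $a : X \to \mathrm{pt}$, we then have
\[\Hom_{D^b\MHM(X)}(E, F) \;=\; \Hom_{D^b\MHM(\mathrm{pt})}(\Q^H, a_*V).\]
Since $V$ sits in cohomological degree $0$, $a_*V = R\Gamma(X, V)$ lies in non-negative degrees, so the spectral sequence computing the derived $\Hom$ on the right collapses to yield $\Hom_{\mathrm{MHS}}(\Q^H, H^0(X, V))$. The analogous identification holds for $U$ via $b : U \to \mathrm{pt}$, and the natural restriction map on $\Hom$'s is then identified with $\Hom_{\mathrm{MHS}}(\Q^H, -)$ applied to the restriction morphism of mixed Hodge structures
\[H^0(X, V) \;\longrightarrow\; H^0(U, j^*V).\]
Thus the lemma reduces to showing this latter morphism is an isomorphism in $\mathrm{MHS}$.

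A morphism of mixed Hodge structures that is a rational isomorphism is automatically a MHS isomorphism (by strictness of the weight and Hodge filtrations), so it suffices to check the rational claim that $H^0(X, L) \to H^0(U, L|_U)$ is an isomorphism for the local system $L$ underlying $V$. Decomposing $X$ into its connected components $X_i$, one checks that $U \cap X_i$ is dense open and connected in $X_i$ -- because $X_i \setminus U$ is a closed analytic subset of complex codimension $\geq 1$, hence real codimension $\geq 2$, in the connected complex analytic space $X_i$ -- and so the claim reduces to the surjectivity of $\pi_1(U \cap X_i) \to \pi_1(X_i)$, which holds for the same real-codimension reason and gives $L^{\pi_1(X_i)} = L^{\pi_1(U \cap X_i)}$.

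The only nontrivial technical points to verify are (1) that $R\cHom$ of smooth objects is smooth and concentrated in degree $0$, and (2) the cohomological interpretation $\Hom_{\MHM(X)}(E,F) = \Hom_{\mathrm{MHS}}(\Q^H, H^0(X, \cHom(E,F)))$ -- both standard consequences of Saito's formalism once one tracks through the definitions. After those are in hand, the rest of the argument is essentially classical topology of local systems.
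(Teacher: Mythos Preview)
Your approach is essentially the paper's: reduce via internal $\sHom$ and adjunction to $\Hom(E,F)=\Hdg_0(H^0(X,\cHom(\cE,\cF)))_\Q$, then invoke the rational isomorphism $H^0(X,\cL)\cong H^0(U,\cL|_U)$ for the local system $\cL=\cHom(\cE,\cF)$, with strictness upgrading this to an isomorphism of mixed Hodge structures. You spell out more than the paper does---the paper simply asserts the rational isomorphism and the Hodge-class identification without further comment.

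There is, however, a gap in your topological justification that you should be aware of. The claim that $U\cap X_i$ is connected with $\pi_1(U\cap X_i)\twoheadrightarrow\pi_1(X_i)$ does \emph{not} follow from a real-codimension-$2$ argument when $X_i$ is singular: take $X$ to be two lines meeting at a point $p$ and $U=X\setminus\{p\}$; then $U$ is disconnected and $H^0(X,\Q_X)\to H^0(U,\Q_U)$ is $\Q\hookrightarrow\Q^2$, not an isomorphism. So the lemma as literally stated actually fails in this example, and the paper's one-line assertion of the rational isomorphism glosses over the same point. Your argument is valid once each connected component of $X$ is irreducible---a Zariski-dense open of an irreducible complex variety is connected, and $\pi_1$-surjectivity is then a standard fact---so you should either add that hypothesis or note that the applications in the paper can be reduced to that case.
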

\begin{proof}  Let $\cE=\rat(E)$ and $\cF=\rat(F)$.  On the level of sheaves
  \[\Hom(\cE,\cF)\cong \Hom(j^*\cE,j^*\cF)\]
via the natural map, while
\begin{align}
\Hom(E,F)&=\Hom(\Q_X^H,\sHom(E,F))\notag\\
&=\Hom(\Q(0),\pt_*\sHom(E,F))\label{eq saito}\\
&=\Hdg_0(\Hom(\cE,\cF))_\Q\notag
\end{align}
where we equip $\Hom(\cE,\cF)$ with its mixed Hodge structure as $\rat(H^0\pt_*\sHom(E,F))$ and define $\Hdg_k(H)_\Q:=\Hom(\Q(-k),H)=F^kH\cap W_{2k}H_\Q$ in general for a rational mixed Hodge structure $H$.  Likewise for $\Hom(j^*E,j^*F)$. 
\end{proof}
It therefore suffices to show the existence of an extension of $E^H_{X^{\mathrm{reg}}}$ on the regular locus $X^{\mathrm{reg}}\subset X$ to an object $E\in D^b\MHM(X)$ with rational structure $\cE_X$.  We proceed by induction on $\dim X$, the 0-dimensional case being obvious.  

Let $\pi:X'\to X$ be a resolution, let $S\subset X$ be the singular locus, and let $S'\subset X'$ be the reduced preimage of $S$.  By induction we may assume $E^H_S$ and $E^H_{S'}\cong \pi^*E_S$ exist, and by stipulation $E^H_{X'}$ exists.  We have a triangle in $D^b\MHM(X)$
\begin{equation}\label{eq q cone}\Q_X^H\to \pi_*\Q_{X'}^H\to A\to\Q_X^H[1].\end{equation}
As $\pi$ is proper the middle map has an adjoint
\begin{equation}\notag\label{eq q map}\Q_{X'}^H\to \pi^!A.\end{equation}
Consider the map
\[E^H_{X'}\to E^H_{X'}\otimes \pi^!A\]
obtained from tensoring by $E^H_{X'}$.  We have natural identifications
\[E^H_{X'}\otimes \pi^!A\cong E^H_{S'}\otimes \pi^!A\cong \pi^!(E^H_S\otimes A)\] 
the first because $\pi^!A$ is supported on $S'$ and the second because $E^H_{S}$ is smooth.  We therefore obtain a map $E^H_{X'}\to \pi^!(E^H_S\otimes A)$, and we define $E$ to be the cone of the adjoint:
\begin{equation}\label{eq E cone}E\to \pi_*E^H_{X'}\to E^H_S\otimes A\to E[1].\end{equation}
The image of \eqref{eq E cone} under $\rat$ is easily seen to be isomorphic to the natural sequence
 \[\cE_X\to R\pi_*\cE_{X'}\to \cE_S\otimes \rat(A)\to\cE_X[1].\]
 obtained by tensoring the $\rat$ of \eqref{eq q cone} by $\cE_X$.  Moreover, restricting \eqref{eq E cone} to the regular locus we see (by proper base-change) that $E^H_{X^\mathrm{reg}}\cong E^H_{X'}|_{X^\mathrm{reg}}\cong E|_{X^\mathrm{reg}}$.
 
For the second claim, assume that $X$ is seminormal and let $\pi:X'\to X$ be a resolution. Let $E\in D^b\MHM(X)$ be a smooth object and let us prove that it comes from an admissible variation of rational mixed Hodge structures on $X$. Since we can argue Zariski-locally, let's assume that $\pi^*E$ is associated to a period map $\phi:X'\to\cM$. Clearly $\phi$ is pointwise constant on any fiber of $\pi$.  Since $\cM$ is smooth and $X$ seminormal, it follows that $\phi$ factors through $X$.  Here we've used that the analytification of a seminormal algebraic space is weakly normal \cite[Cor. 6.14]{gtsn} so that the regular functions are continuous meromorphic functions. \end{proof}

\begin{remark} \label{remark should}
The seminormality hypothesis is necessary in the second statement of Proposition \ref{prop mhm} as in general the seminormalization $X'\to X$ is a universal homeomorphism and the functor $\rat :D^b\MHM(X)\to D_c^b(\Q_X)$ is faithful.

\end{remark}

\subsection{Monodromy of extensions}
Let $Y$ be a reduced algebraic space with an admissible variation of rational mixed Hodge structures $V_Y$.  Let $X$ be a reduced algebraic space with a map $f:X\to Y$ and an admissible variation of rational mixed Hodge structures $E_X$ which sits in an extension
\[0\to V_X\to E_X\to \Q_X(0)\to0\]
where $V_X=f^*V_Y$.  We have a corresponding exact sequence of rational structures
\[0\to \cV_X\to\cE_X\to\Q_X\to0.\]
Given a point $y\in Y$, let $U\subset Y$ be a small neighborhood.  The local system $\cE_X$ restricted to $X_U:=f^{-1}(U)$ has monodromy landing in $\cV_{Y,y}$, and we will need two lemmas controlling the image.
\begin{lemma}\label{lemma saito}In the above situation, the image of the extension class of $\cE_X$ in $\Ext^1(\Q_X,\cV_X)\cong H^1(X,\cV_X)$ under the composition
\begin{equation}\label{eq sequence}H^1(X,\cV_X)\to H^1(Y,Rf_*\cV_X)\to (R^1f_*\cV_X)_y\end{equation}
is Hodge of weight 0 for any $y\in Y$.
\end{lemma}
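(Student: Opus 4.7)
The plan is to use Saito's theory of mixed Hodge modules, accessed via Proposition \ref{prop mhm}, to lift every object and morphism in sight to $D^b\MHM$ and obtain the required Hodge-theoretic structure on the stalk $(R^1f_*\cV_X)_y$ for free.

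First, by the fully faithful functor of Proposition \ref{prop mhm}, the exact sequence of admissible variations on $X$ lifts to a short exact sequence $0 \to V_X^H \to E_X^H \to \Q_X^H(0) \to 0$ in $\MHM(X)$; equivalently, an extension class
\[\alpha \in \Ext^1_{\MHM(X)}(\Q_X^H(0), V_X^H) = \Hom_{D^b\MHM(X)}(\Q_X^H, V_X^H[1]).\]
Applying $f_*$ at the level of mixed Hodge modules (which lifts the constructible pushforward $Rf_*$), and pre-composing with the unit $\Q_Y^H \to f_*\Q_X^H$ of the $(f^*,f_*)$-adjunction, gives
\[\tilde\alpha \in \Hom_{D^b\MHM(Y)}(\Q_Y^H, f_*V_X^H[1]).\]
Unwinding via the identity $\Hom_{D^b\MHM(Y)}(\Q_Y^H, f_*V_X^H[1]) = \Hdg_0 H^1(Y, Rf_*\cV_X)$ (as in equation \eqref{eq saito}), we see that $\tilde\alpha$ realizes the image of the extension class in $H^1(Y, Rf_*\cV_X)$ as a Hodge class; thus the image is already Hodge of weight $0$ before further restricting to the stalk at $y$.

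To push the conclusion down to the stalk at $y$, I would next apply $i_y^*$, where $i_y \colon \{y\} \to Y$ is the inclusion, and use that $i_y^*$ on $D^b\MHM$ lifts ordinary pullback of constructible complexes. Then $i_y^*\tilde\alpha$ is a morphism $\Q^H \to i_y^* f_*V_X^H[1]$ in $D^b\MHM(\{y\}) = D^b\MHS$, and taking $H^1$ of the target recovers the stalk $(R^1f_*\cV_X)_y$ together with its canonical mixed Hodge structure (coming from the MHM $\mathcal{H}^1(f_*V_X^H)$ on $Y$). Thus $i_y^*\tilde\alpha$ gives precisely the desired morphism $\Q(0) \to (R^1f_*\cV_X)_y$ in MHS, exhibiting the image of the extension class as Hodge of weight $0$.

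The main obstacle is not conceptual but bookkeeping: checking that the composition in the statement of the lemma really does correspond, after the MHM lift, to the sequence of operations (unit of adjunction, then stalk), and verifying that $\mathcal{H}^1(f_*V_X^H)$ indeed endows $(R^1f_*\cV_X)_y$ with the MHS in question. One point worth being careful about is that Proposition \ref{prop mhm} is only a full embedding for general reduced $X$ (equivalence needs seminormality), but full faithfulness is enough to lift the short exact sequence; alternatively, if needed, one can first pull back to the seminormalization (or a resolution) to gain the equivalence, apply the argument there, and then descend, since Leray factors through such a map.
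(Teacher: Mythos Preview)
Your proposal is correct and follows essentially the same approach as the paper: lift the extension triangle to $D^b\MHM(X)$ via the full faithfulness of Proposition \ref{prop mhm} (equivalently Lemma \ref{lemma mhm morph}), and then use that the maps in \eqref{eq sequence} underlie MHM functors to conclude the image is Hodge of weight $0$. The paper's proof is slightly terser in that it observes the extension class is already Hodge of weight $0$ in $H^1(X,\cV_X)$ itself (via the identification $\Hom(\Q(0),H^1\pt_*V_X^H)=\Hdg_0 H^1(X,\cV_X)$), and then relies on the remark preceding the proof that the whole composition \eqref{eq sequence} underlies MHS morphisms, whereas you explicitly push through $f_*$ and $i_y^*$; but this is a cosmetic difference in bookkeeping, not in substance.
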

Note that in the statement of the lemma we are using that $\cV_X$ underlies an object $V^H_X\in D^b\MHM(X)$ by Proposition \ref{prop mhm}, and that the sequence \eqref{eq sequence} underlies
\[H^1\pt_*(V^H_X)\to H^1\pt_*(f_*V^H_X)\to H^1(i^*_yf_*V^H_X)\]  
where $i_y:y\to Y$ is the inclusion.  All three groups in \eqref{eq sequence} are therefore equipped with mixed Hodge structures and the maps are morphisms of mixed Hodge structures.
\begin{proof}    The triangle
\[\cV_X\to\cE_X\to\Q_X\to\cV_X[1]\]
lifts to a triangle 
\[V^H_X\to E^H_X\to \Q^H_X\to V_X^H[1]\]
in $D^b\MHM(X)$ as the morphisms exist by Lemma \ref{lemma mhm morph} and exactness can be checked on the underlying rational structures.  Moreover,
\[\Hom(\Q(0),H^1\pt_*V_X)=\Hom(\Q(0),\pt_*V^H_X[1])=H^1\pt_*(V^H_X)\]
and the group on the left is the weight 0 Hodge classes of $H^1(X,\cV_X)$.
\end{proof}
Note that $f_*V^H_X=V^H_Y\otimes f_*(\Q^H_X)$ and that $$H^1(i_y^*f_*V^H_X)\cong i_y^*V^H_Y\otimes H^1(i_y^*f_*\Q^H_X)\cong \Hom(H_1(X_U,\Q),\cV_{Y,y})$$
as mixed Hodge structures.  Furthermore, under this identification the image of the extension class of $\cE_X$ under \eqref{eq sequence} is precisely the monodromy representation of $\cE_X$ restricted to $X_U$.  

The previous lemma therefore implies that the monodromy representation $H_1(X_U,\Q)\to \cV_{Y,y}$ is a morphism of mixed Hodge structures; the following lemma controls the Hodge numbers of $H_1(X_U,\Q)$.
\begin{lemma}\label{lemma numbers}  For any map $f:X\to Y$ of reduced algebraic spaces and any $y\in Y$, the nonzero Hodge numbers $h^{p,q}$ of $(R^nf_*\Q_X)_y$ satisfy $0\leq p,q\leq n$.
\end{lemma}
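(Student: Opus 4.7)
The plan is to combine two standard reductions in Saito's formalism: a cubical hyperresolution of $X$ reduces us to the case where $X$ is smooth, and then Nagata compactification together with resolution of singularities reduces us to a computation on the fiber of a proper morphism, where Deligne's classical Hodge number bounds apply. For the first reduction, choose a cubical hyperresolution $\pi:X_\bullet\to X$ with each $X_a$ smooth; Saito's formalism produces an isomorphism $\Q_X^H\simeq R\pi_*\Q_{X_\bullet}^H$ in $D^b\MHM(X)$, and applying $Rf_*$ and taking stalks at $y$ yields a first-quadrant spectral sequence of mixed Hodge structures with
\[E_1^{a,b} = \bigl(R^b(f\circ\pi_a)_*\Q_{X_a}^H\bigr)_y \Longrightarrow \bigl(R^{a+b}f_*\Q_X^H\bigr)_y.\]
The constraint ``$h^{p,q}=0$ unless $0\le p,q\le n$'' is stable under subquotients and extensions of MHS, and the bound on $E_1^{a,b}$ with $n=b$ a fortiori implies the bound with $n=a+b$, so it suffices to prove the lemma under the assumption that $X$ is smooth.

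Assuming $X$ is smooth, I would apply Nagata compactification and resolution of singularities to factor $f$ as $f=\bar f\circ j$, with $\bar f:\bar X\to Y$ proper, $j:X\hookrightarrow\bar X$ an open immersion, $\bar X$ smooth, and $D:=\bar X\setminus X$ a simple normal crossings divisor. The Leray spectral sequence then gives
\[E_2^{p,q} = \bigl(R^p\bar f_*(R^qj_*\Q_X^H)\bigr)_y \Longrightarrow \bigl(R^{p+q}f_*\Q_X^H\bigr)_y\]
as mixed Hodge structures. The standard SNC computation, upgraded to MHM, yields a canonical isomorphism $R^qj_*\Q_X^H\cong a^{(q)}_*\Q_{D^{(q)}}^H(-q)$, where $a^{(q)}:D^{(q)}\to\bar X$ is the natural finite map from the normalization of the codimension-$q$ stratum of $D$; proper base change for MHM along the proper map $\bar f\circ a^{(q)}$ then identifies $E_2^{p,q}$ with $H^p((\bar fa^{(q)})^{-1}(y),\Q)(-q)$ as MHS. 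By Deligne's theorem, the cohomology $H^p$ of the proper algebraic space $(\bar fa^{(q)})^{-1}(y)$ has Hodge numbers $h^{a,b}$ vanishing outside $0\le a,b\le p$; the Tate twist by $(-q)$ shifts this window to $q\le a,b\le p+q$, which for $n=p+q$ lies in $[0,n]\times[0,n]$. The same subquotient/extension argument then transfers the bound to the abutment $(R^nf_*\Q_X)_y$.

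The principal technical obstacle is the MHM-level identification $R^qj_*\Q_X^H\cong a^{(q)}_*\Q_{D^{(q)}}^H(-q)$ together with the proper base change used to compute $E_2^{p,q}$; both are standard in Saito's theory but require some bookkeeping. The hyperresolution reduction is essentially formal, relying on the existence of $\Q_X^H$ as an MHM on reduced algebraic spaces in the sense of Proposition \ref{prop mhm} and the compatibility of hyperresolutions with $Rf_*$.
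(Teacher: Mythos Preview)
Your proposal is correct and runs parallel to the paper's proof, with the two reduction steps implemented differently.

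For the reduction to smooth $X$: the paper argues by induction on $\dim X$, taking a single resolution $\phi:X'\to X$, analysing the cone $A$ of $\Q_X\to R\phi_*\Q_{X'}$ (supported on the lower-dimensional singular locus, hence handled by induction), and reducing to a statement about $H^n(i'^*Rj'_*\Q_{X'})$ on the log resolution.  You instead invoke a cubical hyperresolution and its $E_1$ spectral sequence.  Your version is cleaner as a black box; the paper's is more elementary in that it only uses ordinary resolution of singularities.

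For the smooth case, both arguments compactify to an SNC boundary and reduce to Deligne's bound for the cohomology of the proper fibers of the strata $D^{(q)}\to Y$.  The paper does this via the weight filtration on $j'_*\Q_{X'}^H[d]$, with graded pieces $\bigoplus_{|I|=k}\Q^H_{D_I}(-k)[d-k]$, restricted to the fiber $F'$; you phrase the same computation as the Leray spectral sequence for $Rj_*$, then apply proper base change.  One caution worth making explicit: the standard $t$-structure does not in general lift to $D^b\MHM$, so the claim that the Leray spectral sequence is one of mixed Hodge structures is not automatic.  What makes it work here is precisely that for an SNC complement the canonical filtration on $Rj_*\Q_X$ agrees (up to reindexing) with the weight filtration, so your identification $R^qj_*\Q_X^H\cong a^{(q)}_*\Q_{D^{(q)}}^H(-q)$ holds at the MHM level---this is the content of the ``bookkeeping'' you flag, and it is exactly what the paper's weight-filtration formulation makes manifest.
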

\begin{remark}\label{rmk Y pt}The claim is true for $Y$ a point (for instance see \cite[Thm. 5.39]{PSmix}), and therefore for proper $f$ by proper base-change.
\end{remark}
In the following proof the sheaves/morphisms between them naturally underlie objects in the derived category of mixed Hodge modules (and thus possess/preserve natural mixed Hodge structures), but we phrase the argument entirely in terms of the rational structures for simplicity.
\begin{proof}  We proceed by induction on $\dim X$, the claim being obvious if $X$ is 0-dimensional.  Choose a relative compactification $Z$ and let $F$ be the fiber over $y$.  
\[\xymatrix{
X\ar[dr]_{f}\ar[r]^j&Z\ar[d]^{g}&\ar[l]_i F\ar[d]\\
&Y&\ar[l]^{i_y}y
}\]
Let $Z'$ be a log resolution of $(Z,Z\setminus X)$ and let $X',F'$ be the reduced preimages of $X,F$.  \[\xymatrix{
S'\ar[r]^{\iota'}\ar[d]^\gamma&X'\ar[d]^\phi\ar[r]^{j'}&Z'\ar[d]^{\pi}&\ar[l]_{i'} F'\ar[d]^p\\
S\ar[r]_\iota&X\ar[r]_j&Z&\ar[l]^i F\\
}\]
The map $\phi$ is an isomorphism on a dense Zariski open set $V\subset X$; let $\iota:S\to X$ be the inclusion of the complement of $V$ and $\iota':S'\to X'$ the preimage.

There is a natural morphism $\Q_X\to R\phi_*\Q_{X'}$.  Let $A$ be the cone, so we have a triangle
\begin{equation}\Q_X\to R\phi_*\Q_{X'}\to A\to\Q_X[1]\label{eq cone}\end{equation}
and note that $\sH^i(A)=0$ for $i<0$.  Applying $R\Gamma i^*Rj_*$ we obtain an exact sequence
\begin{equation}\label{eq les}\xymatrix{
H^{n-1}(i^*Rj_*A)\ar[r]& H^n(i^*Rj_*\Q_X)\ar[r]& H^n(i^*Rj_*R\phi_*\Q_{X'})&\\
&(R^nf_*\Q_X)_y\ar@{=}[u]&H^n(i'^*Rj'_*\Q_{X'})\ar@{=}[u]&
}\end{equation}
 where the vertical identifications are by proper base-change.    The object $A$ is supported on $S$, so pulling \eqref{eq cone} back to $S$ we obtain a triangle 
\[\Q_S\to R\gamma_*\Q_{S'}\to \iota^*A\to\Q_S[1]\]
and after applying $R\Gamma i^*Rj_*\iota_*$ an exact sequence
\[\xymatrix{
H^{n-1}(i^*Rj_*\iota_*R\gamma_*\Q_{S''})\ar@{=}[d]\ar[r]& H^{n-1}(i^*Rj_*A)\ar[r]& H^n(i^*Rj_*\iota_*\Q_S)\ar@{=}[d]\\
(R^{n-1}(f\circ\iota\circ\gamma)_*\Q_{S''})_y&& (R^n(f\circ \iota)_*\Q_S)_y.}\]
By the induction hypothesis, it follows that the nonzero Hodge numbers $h^{p,q}$ of $H^{n-1}(i^*Rj_*A)$ have $0\leq p,q\leq n$, and by \eqref{eq les} it is therefore enough to prove:
\begin{claim} The nonzero Hodge numbers $h^{p,q}$ of $H^n(i'^*Rj'_*\Q_{X'})$ satisfy $0\leq p,q\leq n$.
\end{claim}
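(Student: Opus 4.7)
The plan is to reduce the claim to the already-known proper case (Remark \ref{rmk Y pt}) via Saito's weight filtration on $Rj'_*\Q^H_{X'}$, exploiting the fact that $j': X' \hookrightarrow Z'$ is the complement of a normal crossings divisor $D' := Z' \setminus X'$ in a smooth space $Z'$ which is proper over $Y$. Writing $D' = \bigcup_i D'_i$ for the decomposition into smooth components and $a^{(k)} : {D'}^{(k)} \to Z'$ for the natural map from the normalization ${D'}^{(k)} := \bigsqcup_{|I|=k} \bigcap_{i\in I}D'_i$ of the codimension-$k$ stratum (which is smooth, and proper over $Y$), the first step is to invoke the weight filtration on $Rj'_*\Q^H_{X'}$---equivalently, the Deligne weight filtration on the log de Rham complex $\Omega^\bullet_{Z'}(\log D')$---whose graded pieces are
\[\gr^W_k \, Rj'_*\Q^H_{X'} \;\cong\; a^{(k)}_*\,\Q^H_{{D'}^{(k)}}(-k)[-k].\]

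Applying $R\Gamma(F',i'^*\!-)$ would then yield a spectral sequence of mixed Hodge structures
\[E_1^{-k,\,n+k} \;=\; H^{n-k}(F'\cap {D'}^{(k)},\Q)(-k) \;\Longrightarrow\; H^n(i'^*Rj'_*\Q_{X'}),\]
strictly compatible with the weight filtration. Since the composition ${D'}^{(k)}\hookrightarrow Z' \to Y$ is proper, proper base change identifies $H^{n-k}(F'\cap {D'}^{(k)},\Q)$ with the stalk at $y$ of $R^{n-k}({D'}^{(k)}\to Y)_*\Q$; hence by the proper case of the lemma its nonzero Hodge numbers $h^{p,q}$ are confined to $0\leq p,q\leq n-k$. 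After the Tate twist $(-k)$, the nonzero Hodge numbers of $E_1^{-k,n+k}$ therefore lie in $k\leq p,q\leq n$, and in particular in $[0,n]\times[0,n]$.

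Since the spectral sequence abuts to $H^n(i'^*Rj'_*\Q_{X'})$ as MHS, its Hodge numbers are supported in the union of the Hodge-number supports of the $E_\infty^{-k,n+k}$, and thus in $[0,n]\times[0,n]$ as required. I expect the main obstacle to be the precise identification of the weight filtration and the attendant spectral sequence structure in Saito's formalism---once that bookkeeping is set up, the combinatorial Hodge-number estimates and the reduction to Remark \ref{rmk Y pt} are essentially automatic. An alternative, more concrete route is to derive the filtration directly from the Deligne weight filtration on $\Omega^\bullet_{Z'}(\log D')$ together with the Poincaré residue isomorphism, circumventing the need to invoke Saito's formalism at this level of generality.
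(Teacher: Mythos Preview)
Your proposal is correct and is essentially the same argument as the paper's: both use the weight filtration on $Rj'_*\Q^H_{X'}$ (equivalently on $M=j'_*\Q^H_{X'}[d]$), identify the graded pieces with Tate-twisted constant sheaves on the boundary strata $D_I$, pull back to $F'$, and invoke the absolute case (Remark~\ref{rmk Y pt}) for $H^{n-k}(F'_I,\Q)(-k)$. The only cosmetic differences are that the paper works with the shifted object $M$ and phrases the reduction directly via graded pieces rather than as a spectral sequence.
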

\begin{proof}Recall that $Rj'_*$ is exact in the perverse $t$-structure; let $M=j'_*\Q_{X'}^H[d]$ where $d=\dim X$ (we may assume $X'$ irreducible).  Recall that
\[\gr^W_{d+k}M = \bigoplus_{|I|=k}\Q^H_{D_I}(-k)[d-k]\]
where $D_I=\bigcap_{i\in I}D_i$ is the $I$th boundary stratum of $Z'\backslash X'$.  Defining $F'_I:=F'\cap D_I$ to be the reduced intersection, we have $i'^*\Q^H_{D_I}=\Q^H_{F'_I}$ (as this is clearly true on the level of underlying rational structures), so
\[
i'^*\gr^W_{d+k}M=\bigoplus_{|I|=k}\Q^H_{F'_I}(-k)[d-k].
\]
To prove the claim, it suffices to show the claimed vanishing of Hodge numbers for $H^{n-d}(i'^*\gr^W_{d+k}M)$ for all $k$.  This in turn follows because $H^{n-k}(F'_I,\Q)(-k)$ satisfies the vanishing for all $k$ (see Remark \ref{rmk Y pt}).

\end{proof}

\end{proof}

\section{Proof of quasiprojectivity}\label{sect proof}

\subsection{Induction step}

Let $\cM_{w}$ be a mixed period space parametrizing graded-polarized integral mixed Hodge structures $E$ with weights $\leq w$ and let $\mathcal{M}_{ w-1}$ (resp. $\cM_{[w-1,w]}$, $\cD_{w-1}$) be the graded-polarized mixed period space parametrizing mixed Hodge structures $H$ of the form $W_{w-1}E$ (resp. $\gr_{[w-1,w]}^WE$, $\gr_{w-1}^WE$).  We naturally have a definable analytic morphism
\[\cM_{ w}\to\cM_{w-1}\times_{\cD_{w-1}} \cM_{[w-1,w]}\]
From section \ref{sect theta} we have an analytic theta bundle $\Theta_{[w-1,w]}$ on $\cM_{[w-1,w]}$; we also denote by $\Theta_{[w-1,w]}$ the pullback to $\cM_{w}$.  As no confusion is likely to occur, for any period map $\phi:X\to \cM_{w}$ we also denote by $\Theta_{[w-1,w]}$ the pullback to $X$ together with its natural algebraic structure as in Proposition \ref{prop theta alg gen}.

The main result of this section is:
\begin{prop}\label{prop induct step}Let $X,Y,Z$ be algebraic spaces together with a diagram
\[\xymatrix{
X\ar[dd]_{f}\ar[rd]^g\ar[rr]&&\cM_w\ar[dd]\ar[rd]&\\
&Y\ar[rr]\ar[ld]^h&&\cM_{w-1}\times_{\cD_{w-1}}\cM_{[w-1,w]}\ar[ld]\\
Z\ar[rr]&&\cM_{w-1}&
}\]
whose horizontal maps are definable Griffiths transverse closed immersions.  Then $\O_X$ is $g$-ample and the theta bundle $\Theta_{[w-1,w]}$ is $h$-ample.  
\end{prop}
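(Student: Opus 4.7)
The proposition has two independent halves: that $\Theta_{[w-1,w]}$ is $h$-ample and that $\O_X$ is $g$-ample. For the first, I would apply Proposition \ref{lemma definably ample}. The fibers of $\cM_{w-1}\times_{\cD_{w-1}}\cM_{[w-1,w]}\to\cM_{w-1}$ are universal intermediate Jacobians $J(H)$ of a variation $H=\cHom(\gr^W_w\cE,\gr^W_{w-1}\cE)$ pure of weight $-1$, hence compact; by Corollary \ref{cor theta ample one} applied to the closure of $h^{-1}(z)$ in $J(H_z)$, this closure is a translate of an abelian subvariety on which $\Theta_{[w-1,w]}$ is ample. For any 0-dimensional $P\subset h^{-1}(z)$ and $n\gg 1$, classical theta sections then separate $P$; these are definable via the biextension structure of Proposition \ref{prop theta alg gen}, yielding the stalk-level surjectivity required by Proposition \ref{lemma definably ample}.

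For the $g$-ampleness of $\O_X$, I would apply Corollary \ref{cor qa crit} and show $g$ is definable-analytically quasiaffine. Fix $y\in Y$ and a small analytic neighborhood $U\subset Y$; over $U$ the fibers of $\cM_w\to\cM_{w-1}\times_{\cD_{w-1}}\cM_{[w-1,w]}$ are intermediate Jacobians $J(\cV)$ of a variation $\cV=\cHom(\gr^W_w\cE,W_{w-2}\cE)$ of weights $\le -2$. A Hodge-type count shows that for any MHS $V$ of weights $\le -2$ one has $F^0V\cap V_\R=0$: a real element of $F^0V$ lies in $F^0V\cap\overline{F^0V}$, a sum of Hodge pieces $V^{p,q}$ with $p,q\ge 0$, contradicting $p+q\le -2$. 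Hence $V_\Z$ injects discretely into $V_\C/F^0V$ through the real subspace $V_\R$, and the same argument shows $F^{-1}V\cap V_\R$ is totally real inside $F^{-1}V/F^0V$. Consequently, for any sub-lattice $\Lambda\subset V_\Z$ contained in $F^{-1}V\cap V_\R$, the partial quotient $V_\C/(F^0V+\Lambda)$ is isomorphic to a product $(\C^*)^a\times\C^b$, in particular quasiaffine.

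To show that the map $g^{-1}(U)\to J(\cV_U)$ lifts through such a partial quotient, I would apply Lemmas \ref{lemma saito} and \ref{lemma numbers}. After twisting by $(\gr^W_w\cE_X)^\vee$, the variation $\cE_X$ yields an extension $0\to V_X\to E_X\to\Q_X(0)\to 0$ with $V_X$ of weights $\le -2$. Lemma \ref{lemma saito} identifies the image of the extension class in $(R^1f_*V_X)_y$ as Hodge of weight zero, while Lemma \ref{lemma numbers} restricts the Hodge types of $(R^1f_*\Q_X)_y$ to $\{(p,q):0\le p,q\le 1\}$. Together these pin the fiber-wise monodromy of $\cE_X|_{X_y}$ to the integral Hodge-weight-zero sub-lattice $\Lambda_y:=V_{y,\Z}\cap F^{-1}V_y\cap V_{y,\R}$. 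After possibly shrinking and stratifying $U$ to absorb jumps in $\Lambda_y$, the period map then factors through the quasiaffine partial quotient bundle $\cV_\C/(F^0\cV+\Lambda)$ over $U$, giving the desired definable-analytic embedding $g^{-1}(U)\hookrightarrow\C^N\times U$.

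The main obstacle is this last assembly step: the integral Hodge-class sub-lattice $\Lambda_y$ can jump on Hodge loci in $Y$, so producing a single global definable-analytic factorization in place of a stratum-wise one will require carefully exploiting the definability of the relevant Hodge loci in the $\R_{\an,\exp}$ structure.
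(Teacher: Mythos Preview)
Your outline is close to the paper's, but you have the difficulty in the wrong place.

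For $h$-ampleness there is no need to invoke Proposition~\ref{lemma definably ample} or produce definable theta sections.  The map $\cM_{w-1}\times_{\cD_{w-1}}\cM_{[w-1,w]}\to\cM_{w-1}$ has compact fibers (they are intermediate Jacobians of pure weight $-1$ structures), and since $Y$ and $Z$ are closed in the respective period spaces, $h$ is proper.  Fiberwise ampleness from Corollary~\ref{cor theta ample one} then gives $h$-ampleness directly.

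For $g$-ampleness, the ``jumping lattice'' obstacle you flag is a phantom.  Recall that the definition of \emph{definable-analytically quasiaffine} only asks for the factorization \emph{analytically locally on $Y$}.  So you fix $y\in Y$ once and for all, take a small $U'$, and use the \emph{constant} lattice $\Hdg_{-1}(W_{-2}V_y)_\Z$ at that single basepoint, extended as a constant sub-local-system over $U'$.  The partial quotient $J^{\Hdg}_{U',y}$ is then $(\C^*)^a\times\C^b\times U'$ over $U'$, hence quasiaffine, and Lemmas~\ref{lemma saito} and~\ref{lemma numbers} show the monodromy of $\cE_X|_{X_{U'}}$ lands in exactly this fixed lattice (here you need the exact sequence $(R^1g_*W_{-2}\cV)_y\to(R^1g_*\cV)_y\to(R^1g_*\gr^W_{-1}\cV)_y$ to first kill the weight $-1$ part, since the natural extension has $V_X$ of weights $\le -1$, not $\le -2$ as you wrote).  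No stratification or gluing is required.

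The genuine subtlety you have \emph{not} addressed is why the resulting lift $X_{U'}\to J^{\Hdg}_{U',y}$ is \emph{definable}.  The covering map $J^{\Hdg}_{U',y}\to \pi^{-1}(U')$ has infinite deck group $W_{-2}V_{y,\Z}/\Hdg_{-1}(W_{-2}V_y)_\Z$ and is not itself definable, so definability of the lift does not follow formally.  The paper handles this with a separate argument: pass to a definable fundamental set $\Xi_0\subset M_0$, use the mixed nilpotent orbit theorem \cite[Theorem~6.2]{hpmixnil} to approximate the preimage of $X$ in $\Xi_0$ (outside a bounded set) by finitely many nilpotent orbits with monodromy trivial on $M_{-1}$, and check directly that each such orbit $v+\sum_i t_in_i$ with $n_i\in\Hdg_{-1}(W_{-2}V_y)_\Z$ has definable image in $J^{\Hdg}_{U',y}$.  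This is the step that actually uses o-minimality in an essential way, and it is missing from your sketch.
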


\begin{cor}\label{cor theta ample}In the above situation, $\Theta_{[w-1,w]}$ is $f$-ample.
\end{cor}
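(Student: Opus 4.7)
The plan is to deduce Corollary~\ref{cor theta ample} directly from Proposition~\ref{prop induct step} by invoking the standard transitivity property of relative ampleness. Proposition~\ref{prop induct step} provides two pieces of data: $\O_X$ is $g$-ample (equivalently, $g\colon X\to Y$ is quasi-affine, cf.\ \cite[II.5.1.6]{egaii}) and $\Theta_{[w-1,w]}$ is $h$-ample on $Y$. The factorization $f = h\circ g$ then suggests that relative ampleness of (the pullback of) $\Theta_{[w-1,w]}$ along $f$ should follow by combining these two facts.

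The key step is to apply the transitivity of relative ampleness, as in \cite[Prop.~II.4.6.13]{egaii}: given quasi-compact morphisms $g\colon X\to Y$ and $h\colon Y\to Z$, if $\mathcal{L}$ is $g$-ample and $\mathcal{M}$ is $h$-ample, then $\mathcal{L}\otimes g^*\mathcal{M}^{\otimes n}$ is $(h\circ g)$-ample for $n\gg 0$. Taking $\mathcal{L}=\O_X$ and $\mathcal{M}=\Theta_{[w-1,w]}$ on $Y$, we conclude that $g^*\Theta_{[w-1,w]}^{\otimes n}$ is $f$-ample on $X$ for $n$ large, and since relative ampleness is insensitive to passing to positive powers, the line bundle $\Theta_{[w-1,w]}$ itself on $X$ (which is by construction the $g$-pullback of the theta bundle on $Y$) is $f$-ample.

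The transitivity statement just cited is formulated in EGA for schemes, but it transfers to separated algebraic spaces of finite type over $\C$ without modification, as relative ampleness of a line bundle can be checked \'etale-locally on the target. There is no real obstacle to overcome in this step: the genuine content of the induction is contained in Proposition~\ref{prop induct step}, and the corollary is a formal consequence of the two relative ampleness statements it produces.
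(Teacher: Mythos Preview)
Your proposal is correct and matches the paper's approach: the corollary is stated without proof immediately after Proposition~\ref{prop induct step}, as an immediate consequence via the transitivity of relative ampleness \cite[II.4.6.13]{egaii} applied to the factorization $f=h\circ g$ with $\O_X$ $g$-ample and $\Theta_{[w-1,w]}$ $h$-ample. The paper invokes this same EGA reference explicitly in the subsequent ``General case'' subsection for the analogous step.
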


Before the proof we make some observations.  As in the introduction, $\cM_{w}$ embeds into the mixed period space $\cM'_{ 0}$ parametrizing extensions of the form  
\[0\to W_{w-1}E\otimes \gr_w^WE^\vee \to E'\to\Z(0)\to0\]  
and likewise we have embeddings of $\cM_{w-1}, \cM_{[w-1,w]}, \cD_{w-1},\cM_{w-1}\times_{\cD_{w-1}}\cM_{[w-1,w]}$ into the corresponding spaces $\cM_{-1}', \cM_{[-1,0]}', \cD_{-1}',\cM'_{-1}\times_{\cD'_{-1}}\cM'_{[-1,0]}$ associated to $\cM'_{0}$ that are compatible with the obvious maps. Finally, $\Theta_{[w-1,w]}$ is pulled back from $\cM'_{0}$.  

Thus we may assume $w=0$ and $\gr_0^WE\cong\Z(0)$, and $\cM_0$ parametrizes extensions of the form
\[0\to V\to E\to \Z(0)\to0\]
for $V$ in $\cM_{-1}$.  In this case, $\mathcal{M}_{ 0}$ is isomorphic to the intermediate Jacobian
\[J(\cV):=\cV_\C/F^0\cV+\cV_\Z\] 
over $\cM_{-1}$ where $\cV_\Z$ is the universal $\Z$-local system and $F^\bullet\cV$ is the universal Hodge filtration.  Moreover, we have maps
\[J(W_{-2}\cV)\to J(\cV)\xrightarrow{\pi} J(\gr^W_{-1}\cV)\]
of definable analytic spaces by interpreting each as mixed period spaces.  In particular, $\cM_{-1}\times_{\cD_{-1}}\cM_{[-1,0]}$ is isomorphic to $J(\gr^W_{-1}\cV)$ over $\cM_{-1}$ and $\pi$ has a definable action by $J(W_{-2}\cV)$ which around every point of $J(\gr^W_{-1}\cV)$ admits an analytic (hence definable, after shrinking) trivializing section.

%\begin{defn}  Let $V$ be a mixed integral Hodge structure.  The \emph{Griffiths tranverse} part $\GT(V)$ is the largest mixed integral Hodge substructure $\GT(V)\subset V$ whose only nonzero Hodge numbers are $h^{-1,-1},h^{-1,0},h^{0,-1}$.  
%\end{defn}

%\begin{prop}Let $H$ be a graded-polarized mixed integral Hodge structure.  Then the theta bundle $L$ restricted to $J(\GT(H))$ is ample.
%\end{prop}
%\begin{proof}It is easy to see that for any mixed integral Hodge substructure $H'\subset H$ the theta bundle of $J(H)$ restricts to the $J(H)$ is naturally a semi-abalian variety via
%\[0\to J(\gr_{-2}^W H)\to J(H)\to J(\gr_{-1}^WH)\to 0.\]
%Indeed, $\gr_{-2}^WH\cong \Z(1)^n$ for some $n$ so $J(\gr_{-2}^W H)\cong \G_m^n$ while $J(\gr_{-1}^WH)$ is clearly a polarized abelian variety.  The algebraic structure is specified by...
%
%$J(H)$ is affine over $J(\gr^W_{-1}H)$ and $L$ is pulled back from $J(\gr^W_{-1}H)$ where it is ample, as it is the usual theta bundle.  
%\end{proof}

For any $y\in J(\gr_{-1}^W\cV)$, let $U'\subset J(\gr_{-1}^W\cV)$ be a small ball neighborhood, and for $z\in \cM_{-1}$ the image of $y$ let $U\subset\cM_{-1}$ be the (open) image of $U'$.  Denote  \[J^{\Hdg}_z(W_{-2}\cV_U):=(W_{-2}\cV_U)_{\C}/F^0W_{-2}\cV_U+\Hdg_{-1}(W_{-2}V_z)_\Z,\]
where $\Hdg_{k}(H)_\Z:=F^{k}H\cap W_{2k}H_\Z$.  We have a diagram
\[J^{\Hdg}_z(W_{-2}\cV_U)\to J(W_{-2}\cV_U)\to U.\]
Note that independently of the choice of a basepoint there is an identification of the fundamental group of $J(W_{-2}\cV_U)$ with $W_{-2}V_\Z$, that of $J^{\Hdg}_z(W_{-2}\cV_U)$ with $\Hdg_{-1}(W_{-2}V_z)_\Z$, and that the first map is a covering map with covering group $V_\Z/\Hdg_{-1}(W_{-2}V_z)_\Z$.

We endow $J^{\Hdg}_z(W_{-2}\cV_U)$ with a definable structure as follows.  We may choose a definable analytic splitting\[W_{-2}\cV_U\otimes \O_U=F^0W_{-2}\cV_U\oplus (\Hdg_{-1}(W_{-2}V_z)_\Z\otimes\O_U)\oplus Q\]
where $\Hdg_{-1}(W_{-2}V_z)_\Z\otimes\O_U\subset\cV_U\otimes\O_U$ is the constant subbundle and $Q$ is a definable analytic subbundle.  We may therefore definably identify $$J_z^{\Hdg}(W_{-2}\cV_U)\cong \Hdg_{-1}(W_{-2}V_z)_{\G_m}\times \mathbb{C}^N\times U$$ over $U$, where for a mixed Hodge structure $H$ $$\Hdg_{-1}(H)_{\G_m}:=(\Hdg_{-1}(H)_{\Z}\otimes\C)/\Hdg_{-1}(H)_{\Z}\cong\G_m^n$$ with its canonical definable structure.

Choosing a definable section of $\pi$, we identify
\[\pi^{-1}(U')\cong J(W_{-2}\cV_U)\times_U U'.\]
The fundamental group of $\pi^{-1}(U')$ (after choosing a basepoint) is canonically identified with $(W_{-2}V_z)_\Z$ via the action of $J(W_{-2}\cV_U)$.  Denote by 
\[J^{\Hdg}_{U',y}:=J^{\Hdg}_z(W_{-2}\cV_U)\times_U U'\]
and note that $J^{\Hdg}_{U',y}$ and its definable structure don't depend on the choices.  Furthermore, since $J_z^{\Hdg}(W_{-2}\cV_U)\to U$ is clearly definable-analytically quasiaffine (even affine), we have:
\begin{lemma}\label{lemma def an affine}$J^{\Hdg}_{U',y}\to U'$ is definable-analytically quasiaffine.
\end{lemma}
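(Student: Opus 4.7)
The plan is to observe that this lemma is essentially base change of the ``even affine'' description given in the sentence just preceding it. First I would make explicit that the definable identification
\[ J_z^{\Hdg}(W_{-2}\cV_U)\cong \Hdg_{-1}(W_{-2}V_z)_{\G_m}\times \C^N\times U \]
over $U$, combined with the canonical definable open immersion $\G_m^n\hookrightarrow \C^n$ induced by $\G_m\hookrightarrow\C$, exhibits $J_z^{\Hdg}(W_{-2}\cV_U)\to U$ as a definable analytic locally closed immersion into $\C^{n+N}\times U$ over $U$. This is precisely the content of the parenthetical ``even affine'' remark preceding the lemma statement.

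Then, using that by definition $J^{\Hdg}_{U',y}=J_z^{\Hdg}(W_{-2}\cV_U)\times_U U'$, I would base-change the above factorization along $U'\to U$ to obtain a definable analytic locally closed immersion
\[ J^{\Hdg}_{U',y}\hookrightarrow \C^{n+N}\times U' \]
over $U'$. Here I am using that locally closed immersions in the definable analytic category are stable under pullback, and that fiber products commute with the projection to $U'$. Since the resulting factorization is globally defined on all of $U'$, it a fortiori satisfies the analytic-local condition in the definition of definable-analytic quasiaffineness, and we are done.

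There is essentially no obstacle here; the real work has already been packaged into the preceding construction of the definable splitting of $W_{-2}\cV_U$ and the choice of a definable section of $\pi$. The only small check worth isolating is that the conclusion is intrinsic, i.e. independent of those auxiliary choices, but this follows from the sentence preceding the lemma asserting that the definable structure on $J^{\Hdg}_{U',y}$ itself is independent of the choices (two such trivializations differ by a definable automorphism of $\G_m^n\times \C^N\times U'$ over $U'$, which preserves definable-analytic quasiaffineness).
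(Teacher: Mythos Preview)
Your proposal is correct and follows exactly the paper's intended argument: the paper does not give a separate proof of this lemma, but states it as an immediate consequence of the sentence preceding it, namely that $J_z^{\Hdg}(W_{-2}\cV_U)\to U$ is ``clearly definable-analytically quasiaffine (even affine)'' via the product description, together with the definition $J^{\Hdg}_{U',y}=J_z^{\Hdg}(W_{-2}\cV_U)\times_U U'$. You have simply unpacked this base-change step explicitly.
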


We are now ready to prove Proposition \ref{prop induct step}.
\begin{proof}[Proof of Proposition \ref{prop induct step}]  By \cite[II.4.6.16]{egaii} we may assume $X,Y,Z$ are all reduced.  We first show that $\Theta_{[-1,0]}$ is $h$-ample.  $\Theta_{[-1,0]}$ is algebraic by Proposition \ref{prop theta alg gen}, and since $h$ is proper it suffices to show it is ample on fibers, which is Corollary \ref{cor theta ample one}.

It remains to prove that $\O_X$ is $g$-ample.  Using Lemmas \ref{cor qa crit} and \ref{lemma def an affine}, it is sufficient to show the following:
\begin{claim}  For any $y\in Y$ and any sufficiently small open ball neighborhood $y\in U'\subset J(\gr^W_{-1}\cV)$ as above we have a definable analytic lifting 
\begin{equation}\xymatrix{
&J^{\Hdg}_{U',y}\ar[d]\\
X_{U'}\ar@{-->}[ru]\ar[r]&\pi^{-1}(U')
}\label{eq lift}\end{equation}
where $X_{U'}:=X\cap \pi^{-1}(U')$.
\end{claim}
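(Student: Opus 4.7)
The plan is to reduce the lifting claim to a monodromy statement and then establish that monodromy statement via Lemmas~\ref{lemma saito} and~\ref{lemma numbers}. For $U'$ sufficiently small fix a definable analytic section $s\colon U' \to \pi^{-1}(U')$ of $\pi$ (which exists by the discussion preceding the claim), giving a definable trivialization $\pi^{-1}(U') \cong J(W_{-2}\cV_U) \times_U U'$. Composing the given map $X_{U'} \to \pi^{-1}(U')$ with the first projection of this trivialization and subtracting~$s$ produces an admissible period map $\alpha\colon X_{U'} \to J(W_{-2}\cV_U)$, equivalently an admissible VMHS extension
\[
  0 \to W_{-2}V_{X_{U'}} \to E' \to \Z_{X_{U'}}(0) \to 0
\]
on $X_{U'}$, with $W_{-2}V_{X_{U'}}$ pulled back from $W_{-2}\cV_U$. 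Shrinking so that $U$ is contractible as well, $\pi_1(\pi^{-1}(U')) = (W_{-2}V_z)_\Z$ and $J^{\Hdg}_{U',y} \to \pi^{-1}(U')$ is a covering with deck group $(W_{-2}V_z)_\Z/\Hdg_{-1}(W_{-2}V_z)_\Z$. Hence a topological lift exists iff the monodromy $\mu\colon \pi_1(X_{U'}) \to (W_{-2}V_z)_\Z$ of $E'$ takes values in $\Hdg_{-1}(W_{-2}V_z)_\Z$.

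Proving this Hodge-theoretic containment is the heart of the argument and the anticipated main obstacle. I would apply Lemma~\ref{lemma saito} to $f\colon X_{U'} \to U'$ and the extension $E'$: the image of the extension class in $(R^1 f_* W_{-2}\cV_X)_y$ is Hodge of weight~$0$. Because $W_{-2}\cV_X$ is pulled back from $U$, and (after further shrinking of $U'$) the stalk $(R^1 f_* \Q)_y$ agrees with $H^1(X_{U'},\Q)$, the projection formula identifies
\[
  (R^1 f_* W_{-2}\cV_X)_y \cong (W_{-2}V_z)_\Q \otimes_\Q H^1(X_{U'},\Q) \cong \Hom\bigl(H_1(X_{U'},\Q),\,(W_{-2}V_z)_\Q\bigr),
\]
and under this identification the image of the extension class is the rational monodromy $\mu \otimes \Q$. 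A Hodge class of weight $0$ in this $\Hom$ is precisely a morphism of MHS, so $\mu \otimes \Q$ is such a morphism. Next, by Lemma~\ref{lemma numbers}, $(R^1 f_* \Q)_y$ has Hodge numbers $h^{p,q}$ vanishing outside $\{0,1\}^2$, whence $H_1(X_{U'},\Q)$ has Hodge numbers only of types $(0,0)$, $(0,-1)$, $(-1,0)$, $(-1,-1)$, and in particular $W_{-2}H_1$ is pure of type $(-1,-1)$. Since the target $W_{-2}V_z$ has weights $\leq -2$, strictness of MHS morphisms forces $\mu \otimes \Q$ to factor through $W_{-2}H_1$; being the image of a pure $(-1,-1)$ Hodge structure, this factored map lands in $F^{-1}(W_{-2}V_z)_\C \cap (W_{-2}V_z)_\Q = \Hdg_{-1}(W_{-2}V_z)_\Q$, and integrality gives $\mu(\pi_1(X_{U'})) \subset \Hdg_{-1}(W_{-2}V_z)_\Z$, as required.

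With the topological (hence analytic) lift in hand, the remaining step is definability. I would deduce it from the explicit definable product structure $J^{\Hdg}_{U',y} \cong \Hdg_{-1}(W_{-2}V_z)_{\G_m} \times \C^N \times U'$ set up before the claim: this presents $J^{\Hdg}_{U',y} \to \pi^{-1}(U')$ as a definable covering admitting definable local inverses, which can be piecewise composed with the definable map $\alpha$ to produce the required definable lift.
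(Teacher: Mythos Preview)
Your overall strategy---reduce the lifting to a monodromy constraint and establish that constraint via Lemmas~\ref{lemma saito} and~\ref{lemma numbers}---is the paper's strategy. However, two steps do not go through as written.

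First, you apply Lemma~\ref{lemma saito} to $f\colon X_{U'}\to U'$ and the extension $E'$. But Lemma~\ref{lemma saito} (and the Saito formalism supplying the mixed Hodge structures on the stalks) is stated for maps of reduced \emph{algebraic} spaces, and neither $X_{U'}$ nor $U'$ is algebraic; moreover $E'$, obtained by subtracting a local analytic section, is not the restriction of a global admissible variation. The paper instead applies Lemma~\ref{lemma saito} to the global algebraic map $g\colon X\to Y$ and the full extension $0\to\cV_X\to\cE_X\to\Z_X\to 0$, obtaining a weight-$0$ Hodge class $\xi\in(R^1g_*\cV_X)_y$. It then uses the exact sequence $(R^1g_*W_{-2}\cV)_y\to(R^1g_*\cV)_y\to(R^1g_*\gr^W_{-1}\cV)_y$ together with the fact that $\gr^W_{[-1,0]}\cE$ is pulled back from $Y$ to lift $\xi$ to a Hodge class in $(R^1g_*W_{-2}\cV)_y$. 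Since the monodromy of your $E'$ agrees with that of $\cE_X|_{X_{U'}}$, this recovers what you need---but your direct invocation of Lemma~\ref{lemma saito} does not stand.

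Second, and more seriously, your definability argument fails. The covering $J^{\Hdg}_{U',y}\to\pi^{-1}(U')$ is \emph{not} definable---the paper says so explicitly---as it is an infinite-sheeted cover with deck group $(W_{-2}V_z)_\Z/\Hdg_{-1}(W_{-2}V_z)_\Z$. Even if one had definable local inverses, piecewise composition over the noncompact $X_{U'}$ does not yield a globally definable map: a priori infinitely many sheets may be needed. The paper proves definability of the lift by a separate lemma of a quite different flavor: working in a definable fundamental set $\Xi_0\subset M_0$, it uses the mixed nilpotent orbit theorem \cite[Theorem~6.2]{hpmixnil} to show that, outside a bounded set, the preimage of $X_{U'}$ lies within finite distance of finitely many nilpotent orbits of the form $v+\sum_i t_i n_i$ with $n_i\in\Hdg_{-1}(W_{-2}V_z)_\Z$, each of which visibly has definable image in $J^{\Hdg}_{U',y}$. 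This asymptotic control is the missing ingredient.
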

\begin{proof}  While $J^{\Hdg}_{U',y}\to \pi^{-1}(U')$ is of course not definable, we first claim:  

\begin{lemma}Any analytic lift as in \eqref{eq lift} is definable.
\end{lemma}

\begin{proof}Let $\Xi_0\subset M_0$ be a definable fundamental set for $\cM_0$ and let $\Xi'$ be the preimage of $X_{U'}$ in $\Xi_0$.  Its enough to show that $\Xi'\to J^{\Hdg}_{U',y}$ is definable.  Recall that $M_0$ is identified with $V_\C/F^0V$ over $M_{-1}$.  Taking a resolution of $X$, there are finitely many nilpotent orbits approximating the preimage of $X$ in $\Xi_0$; by \cite[Theorem 6.2]{hpmixnil}, outside of a bounded set $\Xi'$ is within a finite distance (with respect to the standard metric on $V_\C/F^0V$) of finitely many nilpotent orbits whose monodromy is trivial in $M_{-1}$.  It thus suffices to verify that each such nilpotent orbit (restricted to a product of bounded vertical strips) has definable image in $J^{\Hdg}_{U',y}$.  But possibly after shrinking $U'$, each such nilpotent is $v+\sum_i t_in_i$ for $v\in V_\C/F^0V$ and $n_i\in \Hdg_{-1}(V_z)_\Z=\Hdg_{-1}(W_{-2}V_z)_\Z$, for which the claim is obvious.
\end{proof}

%It suffices to show that a definable fundamental set for $J(W_{-2}\cV_U)$ is contained in a finite union of definable fundamental sets for $J_z^{\Hdg}(W_{-2}\cV_U)$.  As above, the universal cover of $J(W_{-2}\cV)$ is identified with $W_{-2}V/F^0W_{-2}V$ over $M_{-2}$, the period domain parametrizing mixed Hodge structures of the form $W_{-2}V$.  Let $\tilde U\subset M_{-2}$ be a component of the preimage of $U$, and identify the universal cover of $J(W_{-2}\cV_U)$ with 
%\[(\Hdg_{-1}(W_{-2}V_z)_\Z\otimes\C)\oplus Q\]
%via the decomposition in \ref{}.  The definable fundamental set for $J_z^{\Hdg}(W_{-2}\cV_U)$ is simply the subset with bounded real part in $\Hdg_{-1}(W_{-2}V_z)_\Z\otimes\C$, which is the preimage of a similarly described definable fundamental set $\Xi$ for $J^{\Hdg}_z(\gr^W_{-2}\cV_U)$.  On the other hand, the definable fundamental set for $J(\gr^W_{-2}\cV_U)$ is the image of 
%\[B\oplus i(H^{-1,-1}\gr^WV_{\tilde{U}})_\R\subset (\gr^W_{-2}V_{\tilde{U}})_\C\]
%in $(\gr^W_{-2}V_{\tilde{U}})_\C/F^0\gr^W_{-2}V_{\tilde{U}}$, where $B\subset (\gr^W_{-2}V_{\tilde{U}})_\R$ is bounded, and this is clearly contained in finitely many translates of $\Xi$.  The definable fundamental set for $J(W_{-2}\cV_U)$ thus maps to finitely many translates of $\Xi$ as well.

By Lemma \ref{lemma saito} the monodromy of the extension
\[0\to \cV_X\to\cE_X\to\Z_X\]
restricted to $X_{U'}$ is an element $\xi$ of $(R^1g_*\cV_X)_y$ which is Hodge of weight 0.  We have an exact sequence
\begin{equation}\label{eq 1}(R^1g_*W_{-2}\cV)_y\to (R^1g_*\cV)_y\to (R^1g_*\gr^W_{-1}\cV)_y\end{equation}
and as the extension 
\[0\to \gr^W_{-1}\cV_X\to \gr_{[-1,0]}^W E\to\Z_X\to 0\]
is pulled back from $Y$, $\xi$ maps to $0$ under the right map of \eqref{eq 1}.  Thus, $\xi$ comes from a class of $(R^1g_*W_{-2}\cV)_y$ which is Hodge of weight 0, and by Lemma \ref{lemma numbers} this is an element of
\[\Hom(H_1(X_{U'},\Q),\Hdg_{-1}(W_{-2}V_y)_\Q).\]
\end{proof}
\end{proof}

\subsection{General case}Let $\cM$ be a mixed period space parametrizing graded-polarized integral Hodge structures $E$.  Let $\cD_w$ be the polarized pure period space of the associated graded object $ \gr^W_w E$ and $\cM_{[w-1,w]}$ the graded-polarized mixed period space of $\gr^W_{[w-1,w]}E$.  We have a diagram  
\[\xymatrix{
\cM\ar[dd]\ar[rd]&\\
&\prod_w \cM_{[w-1,w]}\ar[ld]\\
\cD:=\prod_w\cD_w&
}\]
where the bottom diagonal map arises from the fact that the natural map $\prod_w \cM_{[w-1,w]}\to\cD\times\cD$ factors through the diagonal.

Consider a diagram
\[\xymatrix{
X\ar[dd]_{f}\ar[rd]^g\ar[rr]&&\cM\ar[dd]\ar[rd]&\\
&Y\ar[rr]\ar[ld]^h&&\prod_w \cM_{[w-1,w]}\ar[ld]\\
Z\ar[rr]&&\cD&
}\]
where $X,Y,Z$ are algebraic spaces and the horizontal maps  are Griffiths transverse closed immersions.  The proof of the quasiprojectivity claim of Theorem \ref{thm main} is completed by the following:
\begin{lemma}\hspace{1in} 
\begin{enumerate}
\item $\O_X$ is $g$-ample;
\item $\Theta_X:=\bigotimes_w\Theta_{[w-1,w]}$ is $h$-ample.
\end{enumerate}
\end{lemma}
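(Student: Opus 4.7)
The plan is induction on the number $N$ of weights $w$ with $\gr_w^W E \neq 0$, with Proposition \ref{prop induct step} providing the inductive step; the base case $N\leq 1$ is immediate since $\cM=\cD$ forces $Y\cong Z$, making both assertions trivial.

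For the inductive step, I would single out the top weight $w$ and introduce $X'$, $Y'$, $Z'$ as the closures of the images of $X$ in $\cM_{w-1}$, $\prod_{w'<w}\cM_{[w'-1,w']}$, and $\prod_{w'<w}\cD_{w'}$ respectively, together with $Y_\top$ the closure of the image in $\cM_{w-1}\times_{\cD_{w-1}}\cM_{[w-1,w]}$. Proposition \ref{prop induct step} applied to $X\to Y_\top\to X'$ would yield that $\O_X$ is $(X\to Y_\top)$-ample and that $\Theta_{[w-1,w]}$ is $(Y_\top\to X')$-ample; the induction hypothesis applied to $X'\to Y'\to Z'$ would yield that $\O_{X'}$ is $(X'\to Y')$-ample and that $\bigotimes_{w'<w}\Theta_{[w'-1,w']}$ is $(Y'\to Z')$-ample.

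For (1), I would factor $g:X\to Y$ as $X\to Y_\top\to Y$. The identification $Y_\top=Y\times_{Y'}X'$ (both realized as images of $X$ inside $X'\times_{\cD_{w-1}}\cM_{[w-1,w]}$) exhibits the second arrow as the base change of the quasiaffine map $X'\to Y'$, hence quasiaffine; the first arrow is quasiaffine by the proposition, so their composition $g$ is quasiaffine and $\O_X$ is $g$-ample. For (2), I would factor $h:Y\to Z$ as $Y\to Y''\to Z$, where $Y''$ is the image of $Y$ in $Y'\times_{Z'}Z$. Along the first arrow the fiber direction lies in $\cM_{[w-1,w]}$, where $\Theta_{[w-1,w]}$ is relatively ample by Proposition \ref{prop induct step}; the second arrow is the base change of $Y'\to Z'$ along $Z\to Z'$, along which $\bigotimes_{w'<w}\Theta_{[w'-1,w']}$ is relatively ample by the induction hypothesis. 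Combining these via the standard criterion (if $L_1$ is $f$-ample and $L_2$ is $g$-ample then $L_1\otimes f^*L_2^n$ is $(g\circ f)$-ample for $n\gg 0$), together with the flexibility noted in the introduction that any positive combination $\bigotimes_w\Theta_{[w-1,w]}^{a_w}$ is interchangeably relevant, one obtains the $h$-ampleness of $\bigotimes_w\Theta_{[w-1,w]}$.

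The hard part will be the careful bookkeeping of the various images and fiber products above: in particular, justifying the scheme-theoretic identifications $Y_\top=Y\times_{Y'}X'$ and the analogous factorization of $h$ through $Y''$. These rest on unwinding how $\prod_w\cM_{[w-1,w]}$ sits over $\prod_w\cD_w$ via the natural diagonal identifications at each intermediate $\cD_w$, and that the image of $X$ in $\prod_w\cM_{[w-1,w]}$ lands inside the appropriate iterated fibered product; once these identifications are made, the rest reduces to standard properties of relative ampleness under base change, composition, and tensor products.
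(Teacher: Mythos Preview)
Your inductive strategy is different from the paper's, and while part (1) can be made to work, part (2) as written has a real gap.

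For part (2), your combining step only yields $h$-ampleness of $\Theta_{[w-1,w]}\otimes\bigl(\bigotimes_{w'<w}\Theta_{[w'-1,w']}\bigr)^{n}$ for some $n\gg 0$, not for $n=1$. Your appeal to the ``flexibility noted in the introduction'' is circular: the remark there that every positive combination $\bigotimes_w\Theta_{[w-1,w]}^{a_w}$ is $f$-ample is a \emph{consequence} of the theorem, not an input, and in any case knowing $h$-ampleness for one tuple $(a_w)$ does not formally give it for another. (A Hirzebruch surface $\mathbb{P}(\O\oplus\O(-d))\to\mathbb{P}^1\to\mathrm{pt}$ with $L_1=\O(1)$ and $L_2=\O_{\mathbb{P}^1}(1)$ shows that $n=1$ can genuinely fail in the abstract criterion you invoke.) The paper bypasses this entirely: since $h$ is proper, it suffices to check ampleness on fibers, and each fiber of $h$ sits inside a product of intermediate Jacobians where $\bigotimes_w\Theta_{[w-1,w]}$ is the box product of the individual theta bundles, hence ample by Corollary~\ref{cor theta ample one}. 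This one-line argument gives all positive exponent vectors at once.

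For part (1), your claimed identification $Y_\top = Y\times_{Y'}X'$ need not hold scheme-theoretically (the image of $X$ in a fiber product is generally only a closed subspace of the fiber product of the images). However, the closed immersion $Y_\top\hookrightarrow Y\times_{Y'}X'$ does hold and is all you actually need, so this is easily repaired. The paper's argument for (1) is organized differently and avoids the bookkeeping you flag as ``the hard part'': it filters $f$ by the tower $X=X_{w_{\max}}\to X_{w_{\max}-1}\to\cdots\to X_{w_{\min}}=Z$ of images in the successive $\cM_w$, applies Corollary~\ref{cor theta ample} to each step to get that some $L=\bigotimes_w\Theta_{[w-1,w]}^{a_w}$ is $f$-ample, and then observes that since $L$ is pulled back from $Y$ along $g$, the line bundle $\O_X$ must be $g$-ample (restrict $f$-ampleness to $g$-ampleness, then trivialize $L$ locally on $Y$).
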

\begin{proof}
\begin{enumerate}
\item

  If $w_{\min}$ (resp. $w_{\max}$) is the minimum (resp. maximum) weight $w$ for which $\gr_w^WE\neq 0$, then by taking images we have diagrams
\[\xymatrix{
X_w\ar[r]\ar[d]_{f_w}&\cM_{w}\ar[d]\ar[r]&\cM_{[w-1,w]}\\
X_{w-1}\ar[r]&\cM_{w-1}&
}\]
 with $X=X_{w_{\max}}$, $\cM= \cM_{w_{\max}}$, $Z=X_{w_{\min}}$, and $\cD= \cM_{w_{\min}}$.  By Corollary \ref{cor theta ample} the theta bundle $\Theta_{[w-1,w]}$ is $f_w$-ample, and it follows that $L:=\bigoplus_w \Theta_{[w-1,w]}^{a_w}$ is $f$-ample for some $a_w>0$ \cite[II.4.6.13]{egaii}.  As $L$ is pulled back from $Y$, it follows that $\O_X$ is $g$-ample.
 \item  $\Theta_X$ is ample on the fibers of $h$ by Corollary \ref{cor theta ample one}, and since $h$ is proper, it follows that it is $h$-ample.  
\end{enumerate}
\end{proof}

\bibliography{biblio.mixed.griffiths}
\bibliographystyle{plain}
\end{document}